\tikzset{axis/.style={&lt;-&gt;}}
\newcommand\reallywidehat[1]{%
\savestack{\tmpbox}{\stretchto{%
  \scaleto{%
    \scalerel*[\widthof{\ensuremath{#1}}]{\kern-.6pt\bigwedge\kern-.6pt}%
    {\rule[-\textheight/2]{1ex}{\textheight}}
  }{\textheight}%
}{0.5ex}}%
\stackon[1pt]{#1}{\tmpbox}%
}
 \definecolor{MyBlue}{rgb}{0.05, 0.25, 0.65}
 \definecolor{MyRed}{rgb}{0.90, 0.05, 0.05}
\definecolor{MyGreen}{rgb}{0.05, 0.90, 0.05}
\newcommand{\B}{\boldsymbol}
\newcommand{\C}[1]{\mathcal{#1}}
\newcommand{\D}[1]{\mathbb{#1}}
\newtheorem{theorem}{Theorem}[section]
\newtheorem{proposition}[theorem]{Proposition}
\newtheorem{corollary}[theorem]{Corollary}
\newtheorem{remark}[theorem]{Remark}
\newtheorem{example}[theorem]{Example}
\newtheorem{definition}[theorem]{Definition}
\newcommand{\Nat}{{\mathbb N}}
\newcommand{\Real}{{\mathbb R}}
\newcommand{\id}{\mathrm{id}}
\newcommand{\BISH}{\mathrm{BISH}}
\newcommand{\CST}{\mathrm{CST}}
\newcommand{\dom}{\mathrm{dom}}
\newcommand{\PEM}{\mathrm{PEM}}
\newcommand{\INT}{\mathrm{INT}}
\newcommand{\TOT}{\Leftrightarrow}
\newcommand{\To}{\Rightarrow}
\newcommand{\sto}{\rightsquigarrow}
\newcommand{\MLTT}{\mathrm{MLTT}} 
\newcommand{\CZF}{\mathrm{CZF}}
\newcommand{\prb}{\textnormal{\textbf{pr}}}
\newcommand{\pr}{\textnormal{\texttt{pr}}}
\newcommand{\BST}{\mathrm{BST}}
\newcommand{\Disj}{\mathbin{\B ) \B (}}
\newcommand{\Set}{\mathrm{\mathbf{Set}}}
\newcommand{\Ineq}{\textnormal{\texttt{Ineq}}}
\newcommand{\SetIneq}{\textnormal{\textbf{SetIneq}}}
\newcommand{\SetExtIneq}{\textnormal{\textbf{SetExtIneq}}}
\newcommand{\MIN}{\mathrm{MIN}}
\newcommand{\EFQ}{\mathrm{EFQ}}
\newcommand{\tii}{\mathrm{II}}
\newcommand{\emptys}{\cancel{\mathlarger{\mathlarger{\mathlarger{\mathlarger{\square}}}}}}
\newcommand{\LNP}{\mathrm{LNP}}
\newcommand{\CLNP}{\mathrm{CLNP}}
\newcommand{\PWF}{\mathrm{WF}} 
\newcommand{\IND}{\mathrm{IND}}
\newcommand{\Prime}{\textnormal{\texttt{Prime}}}
\newcommand{\Coprime}{\textnormal{\texttt{Coprime}}}
\newcommand{\Peano}{\textnormal{\texttt{Peano}}}
\newcommand{\Bisho}{\textnormal{\texttt{Bishop}}}
\newcommand{\wfs}{\textnormal{\texttt{wfs}}}
\newcommand{\Fun}{\textnormal{\textbf{Fun}}}
\newcommand{\StrExtFun}{\textnormal{\textbf{StrExtFun}}}
\newcommand{\SetExtIneqRel}{\textnormal{\textbf{SetExtIneqRel}}}
\newcommand{\StrExtFunRel}{\textnormal{\textbf{StrExtFunRel}}}
\newcommand{\WFSet}{\textnormal{\textbf{WFSet}}}
\newcommand{\inj}{\textnormal{\texttt{inj}}}
\newcommand{\lex}{\textnormal{\texttt{lex}}}
\begin{document}


\date{}


\title{\textbf{Coinductive well-foundedness}}



\author{Iosif Petrakis\\	
Department of Computer Science, University of Verona\\
iosif.petrakis@univr.it}  

%





\maketitle

\begin{abstract}
	\noindent 
We introduce a coinductive version $\exists \PWF_{\Nat}$ of the well-foundedness of $\Nat$ that is used in our proof within minimal logic of the constructive counterpart $\CLNP$ to the 
standard least number principle $\LNP$. According to $\CLNP$, an inhabited complemented subset of $\Nat$ has a least element if and only if it is downset located. The use of complemented subsets of $\Nat$ in the formulation of $\CLNP$, instead of subsets of $\Nat$, allows a positive approach to the subject that avoids negation. Generealising $\exists \PWF_{\Nat}$, we define $\exists$-well-founded sets and we prove their fundamental properties.
	\end{abstract}

\noindent
\textit{Keywords}: constructive mathematics, number theory, least number principle, complemented subsets, well-founded sets

\section{Introduction}
\label{sec: intro}

We introduce a coinductive version $\exists \PWF_{\Nat}$ of the well-foundedness of $\Nat$. This is only classically equivalent to strong induction, namely it is implied constructively by strong induction (Proposition~\ref{prp: equiv}(i)), but we have only shown classically that it implies strong induction (Proposition~\ref{prp: equiv}(ii).
The principle $\exists \PWF_{\Nat}$ formulates a simple algorithm, described in section~\ref{sec: PWF}. Using $\exists \PWF_{\Nat}$, we prove the divisibility of a natural number by a prime number, avoiding the classical least number principle $(\LNP)$. According to the latter, a non-empty subset $A$ of $\Nat$ has a (unique) least element $\min_A$ i.e., $\min_A \in A$ and $\forall_{x \in \Nat}(x \in A \To \min_A \leq x)$. $\LNP$ cannot be accepted constructively, since it implies a form of the principle of excluded middle $(\PEM_{\Nat})$ over a constructive and very weak set-theoretic framework (Proposition~\ref{prp: LNPtoPEM}). By constructive mathematics we mean Bishop's informal system of constructive mathematics $\BISH$ (see~\cite{Bi67, BB85, BR87}). The theory of sets underlying $\BISH$ was sketched in~\cite{Bi67, BB85} and elaborated in~\cite{Pe20}.  
  In this note we formulate positively i.e., avoiding (weak\footnote{On the difference between weak and strong negation in constructive mathematics we refer to~\cite{KP25}.}) negation, a constructive version $\CLNP$ of the least number principle. According to it, an inhabited complemented subset of $\Nat$ has a least element if and only if it is downset located. Complemented subsets were introduced by Bishop in~\cite{Bi67}, in order to capture complementation in measure theory in a positive way. Complemented subsets were also used in Bishop-Cheng measure theory that was introduced in~\cite{BC72} and extended seriously in~\cite{BB85}. In~\cite{PW22, MWP24, MWP25} the abstract structure of a \textit{swap algebra} induced by the algebra of complemented subsets of a set is studied.  
 The principle $\CLNP$ is classically equivalent to $\LNP$, and its proof is within minimal logic. 
 Our work on the constructive study of $\LNP$ is in analogy to the constructive study of the least upper bound principle, or of the greatest lower bound principle for real numbers by Bishop and Bridges in~\cite{BB85}. The standard least upper bound principle implies the same form of the principle of excluded middle (actually using the same subset $A_P$, see~\cite{BV06}, p.~32), while in~\cite{BB85}, p.~37,  it is shown constructively that an inhabited subset $A$ of $\Real$ has a least upper (greatest lower) bound if and only if it is bounded above (below) and it is upper (lower) order located.

 We structure this paper as follows: 
 \vspace{-2mm}
 \begin{itemize}
 	
 \item In section~\ref{sec: nat} we include the basic properties of the equality and inequality on $\Nat$. 
 
 \item In section~\ref{sec: PWF} we introduce $\exists \PWF_{\Nat}$, a constructive and coinductive version of the well-foundedness of $\Nat$, which is classically equivalent to the non-existence of an infinite descending sequence in $\Nat$. 
 
 \item In section~\ref{sec: cs} we present some basic facts on complemented subsets of $\Nat$ related to $\CLNP$.
 
 \item In section~\ref{sec: CLNP} we introduce downset located complemented subsets of $\Nat$, and we prove $\CLNP$ within minimal logic and using $\exists \PWF_{\Nat}$ (Theorem~\ref{thm: CLNP}).
 
  \item In section~\ref{sec: wfs}, and generalising $\exists \PWF_{\Nat}$, we introduce $\exists$-well-founded-sets and we prove some of their fundamental properties. 
   
 \end{itemize}
 \vspace{-2mm}
 We work within Bishop Set Theory $(\BST)$, a semi-formal system for $\BISH$ that behaves as a high-level programming language. For all notions and results of $\BST$ that are used here without definition or proof we refer to~\cite{Pe20, Pe22a, Pe24}. The type-theoretic interpretation of Bishop's set theory into the theory of setoids (see especially the work of Palmgren~\cite{Pa12}--\cite{Pa19}) has become nowadays the standard way to understand Bishop sets. Other formal systems for BISH are Myhill's Constructive Set Theory $(\CST)$, introduced  in~\cite{My75}, and Aczel's system $\CZF$ (see~\cite{AR10}).
 For all notions and results from Bishop's theory of sets that are used here without explanation or proof, we refer to~\cite{Pe20, Pe22a, Pe24}.  For all notions and results from constructive analysis within $\BISH$ that are used here without explanation or proof, we refer to~\cite{Bi67, BB85, BR87, BV06}. 

 \section{Natural numbers within $\BISH$}
 \label{sec: nat}

 Within $\BISH$ the natural numbers $\Nat$ is a primitive set equipped with a primitive \textit{equality} and a primitive \textit{inequality} $x \neq_{\Nat} y$. If $1 := S(0)$, where $S \colon \Nat \to \Nat$ is the primitive successor function, the top and the bottom in $\Nat$ are the following formulas, respectively
 $$\top_{\Nat} := 0 \neq_{\Nat} 1, \ \ \ \ \ \bot_{\Nat} := 0 =_{\Nat} 1.$$
 The following axioms on $\Nat$ are accepted: \\[1mm]
 $(\Peano_1)$ $\top_{\Nat}$ .\\[1mm]
 $(\Peano_2)$ $S$ is an \textit{embedding} i.e., $\forall_{x, y \in \Nat}\big(S(x) =_{\Nat} S(y) \To x =_{\Nat} y\big)$.\\[1mm]
 $(\Peano_3)$ or $\IND_{\Nat}$:  $\big[P(0) \ \& \ \forall_{x \in \Nat}(P(x) \To P(S(x)))\big] \To \forall_{x \in \Nat}P(x),$ where $P(x)$ is an \textit{extensional} formula on $\Nat$, that is\footnote{Extensional formulas on a set in $\BISH$ incorporate ``by definition'' the \textit{transport} of Martin-L\"of Type Theory $(\MLTT)$ (see~\cite{HoTT13}).} $\forall_{x, y \in \Nat}\big([x =_{\Nat } y \ \& \ P(x)] \To P(y)\big)$.\\[1mm]
 $(\Bisho_1)$ The equality $x =_{\Nat} y$ is an equivalence relation.\\[1mm]
 $(\Bisho_2)$ The inequality $x \neq_{\Nat} y$ is a \textit{decidable} $(\Ineq_4)$ \textit{apartness relation} $(\Ineq_1 - \Ineq_3)$, where\\[1mm]
 $(\Ineq_1) \  \forall_{x, y \in \Nat}\big(x =_{\Nat} y \ \& \ x \neq_{\Nat} y \To \bot_{\Nat} \big)$.\\[1mm]
 $(\Ineq_2)  \ \forall_{x, y \in \Nat}\big(x \neq_{\Nat} y \To y \neq_{\Nat} x\big)$.\\[1mm]
 $(\Ineq_3) \ \forall_{x, y \in \Nat}\big(x \neq_{\Nat} y \To \forall_{z \in \Nat}(z \neq_{\Nat} x  \vee  z \neq_{\Nat} y)\big)$.\\[1mm]
 $(\Ineq_4) \ \forall_{x, y \in \Nat}\big(x =_{\Nat} y \vee x \neq_{\Nat} y\big)$.\\[2mm]
 The extensionality $(\Ineq_5)$ of $\neq_{\Nat}$ follows from $(\Ineq_1 - \Ineq_3)$, see~\cite{Pe20} Remark 2.2.6, where\\[1mm]
 $(\Ineq_5) \ \forall_{x, x{'}, y, y{'} \in \Nat}\big(x =_{\Nat} x{'} \ \& \ 
 	y =_{\Nat} y{'} \ \& \ x \neq_{\Nat} y \To x{'} \neq_{\Nat} y{'}\big)$.\\[2mm]
The \textit{prime formulas} in $\BISH$ are of the form: $s =_{\Nat} t$, $s \neq_{\Nat} t$, where $s, t \in \Nat$. The \textit{complex formulas} in $\BISH$ are defined as follows: if $A, B$ are formulas, then $A \vee B, A \wedge B, A \To B$ are formulas, and if $S$ is a set and $\phi(x)$ is a  formula, for every variable $x$ of set $S$, then $\exists_{x \in S} \big( \phi(x)\big)$ and $\forall_{x \in S} \big(\phi(x)\big)$ are formulas. If $P$ is a formula in $\BISH$, the \textit{weak negation} $\neg_{\Nat} P$ of $P$ is the formula $\neg_{\Nat} P := P \To \bot_{\Nat}$.
By $(\Ineq_1)$ we get $x \neq_{\Nat} y \To \neg_{\Nat} \big(x =_{\Nat} y)$ i.e., the strong inequality $x \neq_{\Nat} y$ implies the weak inequality $\neg_{\Nat} \big(x =_{\Nat} y)$. The converse implication also follows constructively for $\Nat$ (Proposition~\ref{prp: botnat}(v)).

 All subsets $A$ of $\Nat$ considered here are \textit{extensional} i.e., $A := \{x \in \Nat \mid P(x)\}$, where $P(x)$ is an extensional formula on $\Nat$. The totality of extensional subsets of $\Nat$ is denoted by $\C E(\Nat)$, and the equality on $\C E(\Nat)$ is defined in the obvious way.
 Let $\PEM_{\Nat}$ be the axiom scheme $P \vee \neg_{\Nat} P$, where $P$ is any formula in $\BISH$, such that 
 there is at most one variable $x$ of $\Nat$ ocurring in $P$ and in that case $P(x)$ is extensional. We need this hypothesis of extensionality, in order to apply the separation scheme, which concerns bounded (i.e., formulas with bounded quantifiers only) extensional formulas.
 We include the following standard proof, in order to stress that it is within a certain version of minimal logic $(\MIN)$, which is intuitionisitic logic $(\INT)$ without the ex falso principle $\EFQ_{\Nat}$: $\bot_{\Nat} \To Q$, where $Q$ is an arbitrary formula in $\BISH$. 

\begin{proposition}[$\MIN$]\label{prp:  LNPtoPEM} 
$\LNP$ implies $\PEM_{\Nat}$.
\end{proposition}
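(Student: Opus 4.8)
The plan is to prove $\LNP \To \PEM_{\Nat}$ by the classical ``two-point'' argument, while carefully checking that every inference stays within $\MIN$, i.e.\ that the falsum $\bot_{\Nat}$ is never used to derive an arbitrary formula. Fix a formula $P$ as in the statement of $\PEM_{\Nat}$ (its single free variable of $\Nat$, if present, is treated as a parameter), and define the subset
$$A_P := \{ n \in \Nat \mid (n =_{\Nat} 0 \ \& \ P) \vee n =_{\Nat} 1 \}.$$
The extensionality hypothesis on $P$ is exactly what guarantees that the defining formula is extensional in $n$, so that $A_P \in \C E(\Nat)$ and the separation scheme applies. First I would note that $A_P$ is inhabited, since $1 \in A_P$ via the right disjunct. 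Hence $\LNP$ supplies a least element $m := \min_{A_P}$, with $m \in A_P$ and $\forall_{x \in \Nat}\big(x \in A_P \To m \leq x\big)$.

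The heart of the argument is a case distinction read off directly from $m \in A_P$, which unfolds to $(m =_{\Nat} 0 \ \& \ P) \vee m =_{\Nat} 1$. In the first case $P$ holds outright, yielding the left disjunct of $P \vee \neg_{\Nat} P$. In the second case $m =_{\Nat} 1$, I would establish the right disjunct $\neg_{\Nat} P$, i.e.\ $P \To \bot_{\Nat}$, directly: assuming $P$, the pair $0 =_{\Nat} 0 \ \& \ P$ witnesses $0 \in A_P$, so minimality gives $m \leq 0$, hence $m =_{\Nat} 0$ by the basic order properties of $\Nat$; transitivity of $=_{\Nat}$ then yields $0 =_{\Nat} 1$, which by definition is precisely $\bot_{\Nat}$. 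Discharging the assumption $P$ produces $\neg_{\Nat} P$, and with it $P \vee \neg_{\Nat} P$.

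The point I would stress, and the only delicate part, is that this is genuinely a $\MIN$-derivation: nowhere do we pass from $\bot_{\Nat}$ to an unrelated conclusion via $\EFQ_{\Nat}$. The single place where $\bot_{\Nat}$ appears is inside the proof of the implication $P \To \bot_{\Nat}$, where $\bot_{\Nat}$ is exactly the conclusion being sought, so only $\To$-introduction (not ex falso) is used. I would also note that decidability of $=_{\Nat}$ (the axiom $\Ineq_4$) is not needed: the disjunction driving the case split is furnished by membership in $A_P$ rather than by a decision procedure, and $\bot_{\Nat}$ is reached by the definitional unfolding $\bot_{\Nat} := 0 =_{\Nat} 1$ rather than through $(\Ineq_1)$ together with $\top_{\Nat}$. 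Since $P$ was an arbitrary admissible formula, this establishes the scheme $\PEM_{\Nat}$.
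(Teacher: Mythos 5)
Your proof is correct and follows essentially the same route as the paper: the same two-point set $A_P$, the same case distinction yielding $P$ in one branch and $\neg_{\Nat} P$ (via $0 =_{\Nat} 1$, i.e.\ $\bot_{\Nat}$ as the \emph{goal} of the implication, not a premise for $\EFQ$) in the other. Your variant of splitting directly on the membership disjunction $(m =_{\Nat} 0 \ \& \ P) \vee m =_{\Nat} 1$, rather than on the value of $\min_{A_P}$, is a minor reorganization that if anything makes the $\MIN$-admissibility slightly more transparent.
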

	
\begin{proof}
If $P$ is a formula as it is indicated in $\PEM_{\Nat}$, let $A_P := \{x \in \Nat \mid x =_{\Nat} 1\} \cup \{x \in \Nat \mid x =_{\Nat} 0 \ \& \ P\}$. Clearly, $\min_{A_P} = 0$, or $\min_{A_P} = 1$. If $\min_{A_P} = 0$, then $0 \in A_P$ and hence $P$. If $\min_{A_P} = 1$, we suppose $P$. In this case $0 \in A_P$, hence $0 = \min_{A_P}$. By the uniqueness of $\min_{A_P}$ we get $0 =_{\Nat} 1$ i.e., $\neg_{\Nat} P$. 
\end{proof}

Many instances of $\EFQ_{\Nat}$ are provable within $\MIN$. Next follow some of them.

\begin{proposition}[$\MIN$]\label{prp: botnat} The following hold:\\[1mm]
	\normalfont (i)
	\itshape $\bot_{\Nat} \To \forall_{x \in \Nat}\big(x \neq_{\Nat}
	x\big)$.\\[1mm]
	\normalfont (ii)
	\itshape $\bot_{\Nat} \To \forall_{x \in \Nat}\big(0 =_{\Nat}
	x \big)$.\\[1mm]
	\normalfont (iii)
	\itshape $\bot_{\Nat} \To \forall_{x, y \in \Nat}\big(x =_{\Nat}
	y\big)$.\\[1mm]
	\normalfont (iv)
	\itshape $\bot_{\Nat} \To \forall_{x, y \in \Nat}\big(x \neq_{\Nat}
	y\big)$.\\[1mm]
	\normalfont (v)
	\itshape $\forall_{x, y \in \Nat}\big(\neg_{\Nat} \big(x =_{\Nat} y) \To x \neq_{\Nat} y\big)$.\\[1mm]
		\normalfont (vi)
	\itshape $\forall_{x, y \in \Nat}\big(\neg_{\Nat} (x \neq_{\Nat} y) \To x =_{\Nat} y\big)$
\end{proposition}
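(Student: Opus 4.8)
The plan is to treat part (ii) as the linchpin and derive everything else from it, staying throughout inside $\MIN$: I never invoke $\EFQ_{\Nat}$, but instead exploit the fact that $\bot_{\Nat}$ is the \emph{concrete} equality $0 =_{\Nat} 1$, which can be combined with the \emph{concrete} apartness $\top_{\Nat}$ and with induction. First I would prove (ii) by $\IND_{\Nat}$ on the extensional formula $P(x) := (0 =_{\Nat} x)$ under the standing assumption $\bot_{\Nat}$. The base case $P(0)$ is reflexivity from $(\Bisho_1)$. For the step, since $S$ is a function, $0 =_{\Nat} x$ gives $S(0) =_{\Nat} S(x)$, i.e.\ $1 =_{\Nat} S(x)$; composing with the hypothesis $0 =_{\Nat} 1$ by transitivity yields $0 =_{\Nat} S(x)$. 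Thus under $\bot_{\Nat}$ every natural number collapses to $0$.

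With (ii) in hand, part (iii) is immediate: under $\bot_{\Nat}$ we have $x =_{\Nat} 0 =_{\Nat} y$ by (ii) and $(\Bisho_1)$. For the two apartness statements (i) and (iv) the idea is not to use (ii) to erase apartness, but to \emph{transport} the single genuine apartness we possess. Under $\bot_{\Nat}$, part (ii) gives $0 =_{\Nat} x$ and, since $\bot_{\Nat}$ is $0 =_{\Nat} 1$, also $1 =_{\Nat} 0 =_{\Nat} y$; applying the extensionality $(\Ineq_5)$ to the axiom $0 \neq_{\Nat} 1$ along these two equalities yields $x \neq_{\Nat} y$, which is (iv), and the special case obtained by setting $y := x$ is (i). This is the step where working in $\MIN$ genuinely bites: a strong inequality cannot be conjured from $\bot_{\Nat}$ by ex falso, so it must be manufactured by transporting $\top_{\Nat}$ via $(\Ineq_5)$.

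Finally, (v) and (vi) fall out of decidability $(\Ineq_4)$ together with the parts just proved, recalling that $\neg_{\Nat} Q$ abbreviates $Q \To \bot_{\Nat}$. For (v), assume $\neg_{\Nat}(x =_{\Nat} y)$ and split on $x =_{\Nat} y \vee x \neq_{\Nat} y$: the right disjunct is already the goal, while in the left disjunct the hypothesis yields $\bot_{\Nat}$, whence (iv) delivers $x \neq_{\Nat} y$. Part (vi) is the symmetric argument: assume $\neg_{\Nat}(x \neq_{\Nat} y)$, split by $(\Ineq_4)$, and in the case $x \neq_{\Nat} y$ the hypothesis gives $\bot_{\Nat}$, whence (iii) delivers $x =_{\Nat} y$. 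I expect the only real obstacle to be the apartness parts (i) and (iv): extracting an inequality from $\bot_{\Nat}$ without $\EFQ_{\Nat}$ forces the detour through $\IND_{\Nat}$ and $(\Ineq_5)$, whereas the equality parts (ii), (iii) and the decidable collapses (v), (vi) are routine once (ii) is in place.
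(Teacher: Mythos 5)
Your proof is correct, but it reorganises the argument for the inequality parts in a way that genuinely differs from the paper. The paper proves (i) \emph{first}, by a second induction: the base case transports $0 \neq_{\Nat} 1$ along $0 =_{\Nat} 1$ via $(\Ineq_5)$ to get $0 \neq_{\Nat} 0$, and the inductive step requires establishing that $S$ is injective with respect to $\neq_{\Nat}$, which is itself a small argument combining the embedding property, $(\Ineq_1)$, decidability $(\Ineq_4)$ and the rule $\big[(P \vee Q) \ \& \ \neg_{\Nat} P\big] \To Q$; only then is (iv) deduced from (i), (iii) and extensionality. You instead prove (iv) directly by transporting the single axiom $\top_{\Nat}$ along the two collapse equalities $0 =_{\Nat} x$ and $1 =_{\Nat} y$ supplied by (ii), and obtain (i) as the diagonal instance $y := x$. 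This eliminates the second induction entirely and isolates the one place where a strong inequality must be manufactured in $\MIN$; it is in fact the same transport idea as the paper's own remark following the proposition, which goes one step further and removes the remaining induction by using multiplication. What the paper's route buys is a self-contained proof of (i) that exhibits the $\neq_{\Nat}$-injectivity of $S$ as a reusable fact; what yours buys is economy, with a single induction (for (ii)) carrying all six items. Your treatment of (v) and (vi) also makes explicit what the paper leaves implicit: the rule $\big[(P \vee Q) \ \& \ \neg_{\Nat} P\big] \To Q$ is not valid in minimal logic for arbitrary $Q$, and its use there is legitimate precisely because (iv) and (iii) supply the needed instances $\bot_{\Nat} \To x \neq_{\Nat} y$ and $\bot_{\Nat} \To x =_{\Nat} y$; routing the dead branch of the case split through (iv), respectively (iii), as you do is the cleaner formulation.
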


\begin{proof}
	(i) By induction on $x \in \Nat$. First we show that $0 \neq_{\Nat} 0$. As $0 \neq_{\Nat} 1$, and since by hypothesis $0 =_{\Nat} 1$, by the extensionality of $\neq_{\Nat}$ we get $0 \neq_{\Nat} 0$. The implication $x \neq_{\Nat} x \To S(x) \neq_{\Nat} S(x)$ follows by the injectivity of the successor function $S \colon \Nat \to \Nat$ i.e., $x \neq_{\Nat} y \To S(x) \neq_{\Nat}  S(y)$, for every $x, y \in \Nat$. To show the latter, we suppose that $S(x) =_{\Nat} S(y)$ and since $S$ is an embedding we get $x =_{\Nat} y$. As $x \neq_{\Nat} y$ by hypothesis, we get $\bot_{\Nat}$  from $(\Ineq_1)$. Using the rule $\big[(P \vee Q) \ \& \ \neg_{\Nat} P\big] \To Q$ and the decidability of $\neq_{\Nat}$ we get $S(x) \neq_{\Nat} S(y)$.\\
	(ii) Again we use induction on $x \in \Nat$. The base case $0 =_{\Nat} 0$ follows immediately. The implication 
	$0 =_{\Nat}	x \To 0 =_{\Nat} S(x)$ follows from the fact that if $0 =_{\Nat} x$, then $S(0) =_{\Nat} S(x)$ i.e., $1 =_{\Nat} S(x)$. By the definition of $\bot_{\Nat}$ and the equivalence relation properties of $=_{\Nat}$ we get the required $0 =_{\Nat} S(x)$.\\
	(iii) It follows immediately by case (ii) and the fundamental properties of $=_{\Nat}$.\\
	(iv) It follows immediately by cases (i), (iii) and the extensionality of $\neq_{\Nat}$.\\
	(v) By decidability $x =_{\Nat} y \vee x \neq_{\Nat} y$. If $\neg(x =_{\Nat} y)$ by hypothesis, the rule  $\big[(P \vee Q) \ \& \ \neg_{\Nat} P\big] \To Q$ again gives us $x \neq_{\Nat} y$.\\
	(vi) We work exactly as in the proof of (v).
\end{proof}

\begin{remark}\label{rem: ontheproof}
	\normalfont
	If we use the properties of multiplication on $\Nat$, cases (i) and (ii) of the previous proposition can be shown without induction: if $0 =_{\Nat} 1$, then $0 =_{\Nat}  0 \cdot x =_{\Nat} 1 \cdot x =_{\Nat} x$, for every $x \in \Nat$, and hence $x =_{\Nat} y$, for every $x, y \in \Nat$. By the extensionality of $\neq_{\Nat}$ the inequality $0 \neq_{\Nat} 1$, together with the equalities $x =_{\Nat} 0$ and $1 =_{\Nat} x$ imply that $x \neq_{\Nat} x$, for every $x \in \Nat$. 
\end{remark}

The canonical orders $<_{\Nat}, \leq_{\Nat}$ on $\Nat$ can be defined using the operation of addition, or of cut-off subtraction (see~\cite{TvD88}, p.~124). They can also be introduced as primitive extensional relations on $\Nat$, and they satisfy all expected properties. For example, for every $x, y \in \Nat$ we have that\footnote{For simplicity we omit the subscripts from $<_{\Nat}, \leq_{\Nat}$, and we write $<$ and $\leq$, respectively.}: \\[2mm]
(I1) $x < y \vee x \geq y$,\\[1mm]
(I2) $x \geq y \TOT x = y \vee x > y$.\\[1mm]
(I3) $x < y \To x \neq_{\Nat} y$.\\[1mm]
(I4) $x \neq_{\Nat} y \To x < y \vee y < x$.\\[1mm]
(I5) $\neg_{\Nat}((x < y) \To x \geq y$.

\begin{definition}[Categories of sets]\label{def: sets} Let $(\Set, \Fun)$ be the category\footnote{We denote a category by the pair of its objects \textit{and} arrows.}  of sets and functions, and let $(\SetIneq, \StrExtFun)$ be the category of sets with an inequality that satisfies the corresponding properties $(\Ineq_1, \Ineq_2)$ for $X$, of and strongly extensional functions i.e., functions $f \colon X \to Y$ satisfying $f(x) \neq_Y f(x{'}) \To x \neq_X x{'}$, for every $x, x{'} \in X$. Let $(\SetExtIneq, \StrExtFun)$ be the category of sets with an extenional inequality i.e., the corresponding property $(\Ineq_5)$ holds for $X$, and of strongly extensional functions.	
\end{definition}
	
Clearly, 
$(\SetExtIneq, \StrExtFun) \leq (\SetIneq, \StrExtFun) \leq (\Set, \Fun)$, and $(\Nat, =_{\Nat}, \neq_{\Nat}) \in \SetExtIneq$. 
All functions of type $\Nat \to \Nat$ are strongly extensional. This we cannot accept constructively for every function of type $\Real \to \Real$, as this is equivalent to Markov's principle (see~\cite{Di20}, p.~40). We call a function $f \colon \Nat \to \Nat$ \textit{strongly monotone}, if $\forall_{x, y \in X}\big(f(x) < f(y) \To x < y\big)$. The proof of Proposition~\ref{prp: se} is trivial, and it is based on $(\Ineq_4)$.

\begin{proposition}\label{prp: se}
	Let $f \colon \Nat \to \Nat$ be a function.\\[1mm]
	\normalfont (i)
	\itshape $f$ is strongly extensional.\\[1mm]
	\normalfont (ii)
	\itshape If $f$ is monotone, then $f$ is strongly monotone.\\[1mm]
	\normalfont (iii)
	\itshape If $f$ is strongly monotone and an embedding, then $f$ is monotone.
\end{proposition}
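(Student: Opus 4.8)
The plan is to prove all three parts by a single minimal-logic template that exploits the decidability of equality and order on $\Nat$. The one point requiring care is that we work in $\MIN$, so we may never discharge $\bot_{\Nat}$ into an arbitrary goal via $\EFQ_{\Nat}$. Accordingly, whenever a hypothesis forces two incompatible prime formulas I will derive $\bot_{\Nat}$ only in order to certify a \emph{weak} negation $\neg_{\Nat} P$, and then feed that negation into the decidability-driven rule $\big[(P \vee Q) \ \& \ \neg_{\Nat} P\big] \To Q$ used throughout Proposition~\ref{prp: botnat}, so as to positively select the desired disjunct. This is the only delicate step; everything else is bookkeeping with $(\Ineq_1)$, the facts (I1)--(I3), and transitivity of $<$.

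For (i), suppose $f(x) \neq_{\Nat} f(x{'})$ and apply decidability $(\Ineq_4)$ to get $x =_{\Nat} x{'} \vee x \neq_{\Nat} x{'}$. I refute the left disjunct: if $x =_{\Nat} x{'}$, then, since $f$ respects $=_{\Nat}$, we have $f(x) =_{\Nat} f(x{'})$, which with the hypothesis $f(x) \neq_{\Nat} f(x{'})$ gives $\bot_{\Nat}$ by $(\Ineq_1)$; hence $\neg_{\Nat}(x =_{\Nat} x{'})$. The disjunction rule then yields $x \neq_{\Nat} x{'}$, so $f$ is strongly extensional.

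Parts (ii) and (iii) share one template. For (ii), assume $f$ monotone and $f(x) < f(y)$, aiming at $x < y$. By (I1) I have $x < y \vee x \geq y$, so by the disjunction rule it suffices to prove $\neg_{\Nat}(x \geq y)$. Assuming $x \geq y$, (I2) splits it into $x =_{\Nat} y$ and $x > y$. In the first case $f(x) =_{\Nat} f(y)$ (as $f$ respects $=_{\Nat}$), while (I3) turns $f(x) < f(y)$ into $f(x) \neq_{\Nat} f(y)$, so $(\Ineq_1)$ gives $\bot_{\Nat}$. In the second case monotonicity gives $f(y) < f(x)$, and transitivity of $<$ with $f(x) < f(y)$ gives $f(y) < f(y)$; then (I3) and reflexivity of $=_{\Nat}$ give $\bot_{\Nat}$ via $(\Ineq_1)$. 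Hence $\neg_{\Nat}(x \geq y)$, and the disjunction rule concludes $x < y$. Part (iii) is the exact dual: assuming $f$ strongly monotone, an embedding, and $x < y$, one applies (I1) to $f(x), f(y)$ and refutes $f(x) \geq f(y)$, now using the embedding hypothesis in the equality sub-case (where $f(x) =_{\Nat} f(y)$ forces $x =_{\Nat} y$, contradicting $x < y$ through (I3)) and strong monotonicity in the strict sub-case (where $f(y) < f(x)$ gives $y < x$, contradicting $x < y$ by transitivity). I expect the sole obstacle to be maintaining this discipline of packaging every contradiction as a weak negation and discharging it by decidability, exactly the feature of $(\Ineq_4)$ that the remark before the proposition highlights.
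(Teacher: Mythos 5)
Your proof is correct and follows exactly the route the paper indicates: the paper omits the argument, remarking only that it ``is trivial, and it is based on $(\Ineq_4)$'', and your decidability-plus-refutation template (deriving $\bot_{\Nat}$ only to certify a weak negation and then selecting the desired disjunct of $(\Ineq_4)$ resp.\ (I1)) is precisely that argument, carried out within $\MIN$ as required.
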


\section{The coinductive principle $\exists \PWF_{\Nat}$ of well-foundedness of $\Nat$}
\label{sec: PWF}

The main principle regarding $<_{\Nat}$ that will be used in the proof of Theorem~\ref{thm: CLNP} is the following constructive and coinductive version of well-foundedness of $\Nat$:\\[2mm]$(\exists \PWF_{\Nat})$ $\exists$-well-foundedness of $\Nat$: if $P(x), Q(x)$ are formulas on $\Nat$ that respect $=_{\Nat}$, then
$$\bigg[\exists_{x \in \Nat}Q(x) \ \& \ \forall_{x \in \Nat}\bigg(Q(x) \To \big[P(x) \vee \exists_{y \in \Nat}\big(y < x \ \& \ Q(y)\big)\big]\bigg)\bigg] \To \exists_{x \in \Nat}P(x).$$
$\exists \PWF_{\Nat}$ is a sort of dual to the standard induction principle $\IND_{\Nat}$; its conclusion is an existential formula, and the second disjunct in its hypothesis involves a backward step, rather than a forward one. The algorithm of finding $x \in \Nat$ with $P(x)$ that is captured by this principle is the following: if $x_0 \in \Nat$ with $Q(x_0)$, then either $P(x_0)$, or there is $x_1 < x_0$ with $Q(x_1)$. In the second case, we repeat the argument, and either $P(x_1)$, or there is $x_2 < x_1$ with $Q(x_2)$. After at most $(x_0 +1)$-number of steps we findd $x \in \Nat$ with $P(x)$, since if $Q(0)$ is the case, there is no $x < 0$, and hence $P(0)$ must hold.
	\begin{center}
	\begin{tikzpicture}
		
		\node (A) at (0,0) {$Q(x_0)$};
		\node[right=of A] (B) {};
		\node[left=of A] (C) {};
		\node[below=of B] (D) {$Q(x_1)$};
		\node[below=of C] (E) {$\ \ \ P(x_0)$};
		\node[right=of D] (K) {};
		\node[below=of A] (L) {};
		\node[below=of K] (M) {$  Q(x_2)$};
		\node[below=of L] (N) {$\ \ \ P(x_1)$};
		
		\node[right=of M] (S) {};
		\node[below=of S] (T) {$Q(0)$};
		
		\node[left=of M] (W) {};
		\node[below=of W] (Z) {$P(x_2)$};
		
		\node[left=of T] (U) {};
		\node[below=of U] (X) {$P(0)$};

		\draw[-, very thick] (A)--(D) node [midway,above] {};
		\draw[-, very thick] (A)--(E)node [midway,below] {} ;
		
		\draw[-, very thick] (D)--(M) node [midway,above] {};
		\draw[-, very thick] (D)--(N)node [midway,below] {} ;
		\draw[, dotted, very thick] (M)--(T)node [midway,below] {} ;
		
		\draw[-, very thick] (M)--(Z)node [midway,below] {} ;
		
		\draw[-, very thick] (T)--(X)node [midway,below] {} ;
		
	\end{tikzpicture}
\end{center}
Such an anrgument can be used in a classical proof of $\LNP$: if $x_0 \in A$, then classically either $x_0 =_{\Nat} \min_A(x_0)$, or there is $x_1 \in A$ with $x_1 < x_0$, and so on. I.e, $Q_A(x) :\TOT x \in A$ and $P_A(x) :\TOT \forall_{y \in A}(y \geq x)$. 
So far, we know that $\exists \PWF_{\Nat}$ is only classically equivalent to transfinite (or strong) induction $\forall \PWF_{\Nat}$  on $\Nat$, according to which $\forall_{x \in \Nat}\big(\forall_{y < x}P(y) \To P(x)\big) \To \forall_{x \in \Nat}P(x)$, where $P(x)$ is a formula on $\Nat$ that respects $=_{\Nat}$.
Notice that the proof of the implication $\IND_{\Nat} \To \forall \PWF_{\Nat}$ is within $\INT$, since the proof of $P(0)$ requires $\EFQ$. 


\begin{proposition}\label{prp: equiv}
\normalfont (i)
\itshape $\forall \PWF_{\Nat}$ implies $\exists \PWF_{\Nat}$ constructively\footnote{We would like to thank Thierry Coquand for suggesting this proof to us.}.\\[1mm]
\normalfont (ii)
\itshape $\exists \PWF_{\Nat}$ implies classically $\forall \PWF_{\Nat}$ and $\LNP$.\\[1mm]
\normalfont (iii)
\itshape $\exists \PWF_{\Nat}$ implies constructively that for every sequence $\alpha \colon \Nat \to \Nat$, there is $i \in \Nat$ with $\alpha(i) \leq \alpha(i+1)$.\\[1mm]
\normalfont (iv)
\itshape The non-existence of an infinite descending sequence in $\Nat$ implies classically $\exists \PWF_{\Nat}$.
\end{proposition}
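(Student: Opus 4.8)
The plan is to prove item (iv) by a single proof by contradiction that exactly reverses the informal algorithm displayed before the proposition. Assume that $\Nat$ admits no infinite descending sequence, fix formulas $P(x), Q(x)$ respecting $=_{\Nat}$, and assume the hypothesis of $\exists\PWF_{\Nat}$, namely $\exists_{x\in\Nat}Q(x)$ together with $\forall_{x\in\Nat}\big(Q(x)\To[P(x)\vee\exists_{y\in\Nat}(y<x\ \&\ Q(y))]\big)$. Suppose, towards a contradiction and using classical logic, that $\exists_{x\in\Nat}P(x)$ fails, so that $\forall_{x\in\Nat}\neg P(x)$. Under this assumption the second disjunct of the hypothesis is forced: for every $x$ with $Q(x)$ the formula $P(x)$ is false, hence $\exists_{y\in\Nat}(y<x\ \&\ Q(y))$.

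Next I would manufacture an infinite descending sequence to contradict the standing assumption. Pick $x_0$ with $Q(x_0)$, which exists by $\exists_{x\in\Nat}Q(x)$. Given $x_n$ with $Q(x_n)$, the previous paragraph yields a nonempty set $\{y\in\Nat\mid y<x_n\ \&\ Q(y)\}$; classically this set has a least element by the well-ordering of $\Nat$ (that is, by $\LNP$, a theorem of classical arithmetic and hence available here without circularity), and I set $x_{n+1}$ to be that least element. This recursion defines a function $\alpha\colon\Nat\to\Nat$ by $\alpha(n):=x_n$, and by construction $Q(x_n)$ holds and $x_{n+1}<x_n$ for every $n$. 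Thus $\alpha$ is an infinite descending sequence in $\Nat$, contradicting the hypothesis. Therefore $\exists_{x\in\Nat}P(x)$, which is precisely $\exists\PWF_{\Nat}$.

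Two nonconstructive ingredients are essential and pinpoint the word \emph{classically} in the statement, and the second is the main obstacle to any constructive version. The first is the passage from $\neg\neg\exists_{x}P(x)$ to $\exists_{x}P(x)$, an instance of double-negation elimination. The second, and harder, point is the construction of the descending sequence: one must turn the purely existential statement ``for each suitable $x$ there is a smaller witness'' into an actual sequence. I would avoid an appeal to dependent choice by always selecting the \emph{least} admissible $y$, so that the recursion is given by a definite rule rather than by a choice function; this is exactly where the classical well-ordering of $\Nat$ enters, and it is the step with no constructive analogue. One should also note that without excluded middle the forcing of the second disjunct breaks down, so the argument genuinely cannot be carried out in $\MIN$ or even $\INT$.
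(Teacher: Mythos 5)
Your argument for item (iv) is correct and is essentially the argument the paper has in mind (the paper's own proof of (iv) is just a one-line pointer to ``the classical proof of $\forall \PWF_{\Nat} \To \exists \PWF_{\Nat}$''); your version is in fact more explicit, and the device of always choosing the \emph{least} admissible $y$ to avoid an appeal to dependent choice is a sensible refinement. The two nonconstructive ingredients you isolate --- double-negation elimination on $\exists_x P(x)$ and the classical construction of the descending sequence --- are exactly the right ones.

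The genuine gap is that you have proved only one of the four items. The proposition also asserts: (i) that $\forall \PWF_{\Nat}$ implies $\exists \PWF_{\Nat}$ \emph{constructively}; (ii) that $\exists \PWF_{\Nat}$ classically implies $\forall \PWF_{\Nat}$ and $\LNP$; and (iii) that $\exists \PWF_{\Nat}$ constructively yields, for every $\alpha \colon \Nat \to \Nat$, some $i$ with $\alpha(i) \leq \alpha(i+1)$. None of these follows from your argument for (iv), and each needs its own idea. For (i) the paper applies strong induction to the predicate $R(n)$ stating that the full hypothesis of $\exists \PWF_{\Nat}$ together with $Q(n)$ entails $\exists_x P(x)$; this is the only direction that must be carried out without classical logic, so your contradiction-based template cannot be recycled for it. For (ii) one instantiates $\exists \PWF_{\Nat}$ with $Q'(x) :\TOT \neg P(x)$ and $P'(x) :\TOT P(x) \wedge \neg P(x)$ to derive $\forall \PWF_{\Nat}$, and then uses the known classical equivalence of $\forall \PWF_{\Nat}$ with $\LNP$. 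For (iii) one instantiates with $Q_{\alpha}(x) :\TOT \exists_n (x =_{\Nat} \alpha(n))$ and $P_{\alpha}(x) :\TOT \exists_n (x =_{\Nat} \alpha(n) \ \& \ \alpha(n) \leq \alpha(n+1))$, using only the decidability of the order on $\Nat$. Without these three arguments the proposal does not establish the stated proposition.
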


\begin{proof}
(i) We suppose that $\exists_{x \in \Nat}(Q(x)$, and let $n_0 \in \Nat$, such that $Q(n_0)$ holds. We also suppose that 
$$\forall_{x \in \Nat}\big(Q(x) \To (P(x) \vee \exists_{y < x}(Q(y)))\big).$$
 Let the predicate
$$R(n) :\TOT \bigg[Q(n) \ \& \ \forall_{x \in \Nat}\big(Q(x) \To (P(x) \vee \exists_{y < x}(Q(y)))\big)\bigg] \To \exists_{x \in \Nat}(P(x)).$$
It suffices to show that $\forall_{x \in \Nat}\big(\forall_{y < x}R(y) \To R(x)\big)$, since then by $\forall \PWF_{\Nat}$ we get $\forall_{x \in \Nat}R(x)$, and then the conlusion $R(n_0)$ tohether with the first two hypotheses imply with Modus Ponens the required formula $\exists_{x \in \Nat}(P(x))$.
Let $x \in \Nat$ and let 
$$\forall_{y < x}\bigg(\bigg[Q(y) \ \& \ \forall_{u \in \Nat}\big(Q(u) \To (P(u) \vee \exists_{w < u}(Q(w)))\big)\bigg] \To \exists_{z \in \Nat}(P(z))\bigg).$$
We show 
$$R(x) :\TOT \bigg[Q(x) \ \& \ \forall_{v \in \Nat}\big(Q(v) \To (P(v) \vee \exists_{k < v}(Q(k)))\big)\bigg] \To \exists_{z \in \Nat}(P(z)).$$
Let $Q(x) \ \& \ \forall_{v \in \Nat}\big(Q(v) \To (P(v) \vee \exists_{k < v}(Q(k)))\big)$. Hence, 
$Q(x)$ or $\exists_{k < x}(Q(k))$. In the first case, we get immediately what we want, while in the second case we use the inductive hypothesis for $Q(k)$.\\
(ii) To prove the implication $\exists \PWF_{\Nat} \To \forall \PWF_{\Nat}$, we suppose that $\forall_{x \in \Nat}\big(\forall_{y < x}P(y) \To P(x)\big)$ and that $\neg_{\Nat} \forall_{x \in \Nat}P(x)$, hence classically $\exists_{x \in \Nat}\neg P(x)$. If $Q{'}(x) :\TOT \neg P(x)$ and $P{'}(x) :\TOT P(x) \ \& \ \neg P(x)$, then by hypothesis we have that $\exists_{x \in \Nat}Q{'}(x)$. Let $x \in \Nat$ with $P{'}(x)$. If $x =_{\Nat} 0$, then since $P(0)$ holds trivially, we get $P{'}(0)$. If $x > 0$, then by the hypothesis of $\forall \PWF_{\Nat}$ there is $y < x$ with $Q{'}(y)$. By the conclusion of $\exists \PWF_{\Nat}$ for $Q{'}(x)$ and $P{'}(x)$ we have that $\exists_{x \in \Nat}(P(x) \wedge \neg P(x))$, hence $\neg \neg_{\Nat} \forall_{x \in \Nat}P(x)$, and by double negation elimination we get $\forall_{x \in \Nat}P(x)$.
The fact that $\exists \PWF_{\Nat}$ implies classically $\LNP$ follows from the fact that $\forall \PWF_{\Nat}$ is classically equivalent to $\LNP$ (see~\cite{TvD88}, p.~129).\\
(iii) Let $\alpha \colon \Nat \to \Nat$. We define $Q_{\alpha}(x) :\TOT \exists_{n \in \Nat}(x =_{\Nat} \alpha(n))$ and $P_{\alpha}(x) :\TOT \exists_{n \in \Nat}\big(x =_{\Nat} \alpha(n) \ \& \ \alpha(n) \leq \alpha(n+1)\big)$. Trivially, $Q_{\alpha}(a_0)$ holds. Let $x, n \in \Nat$, such that $x =_{\Nat} \alpha(n)$. By (I1), if $\alpha(n) \leq \alpha(n+1)$, then $P_{\alpha}(x)$ holds. If $\alpha(n) > \alpha(n+1)$, then $Q_{\alpha}(\alpha(n+1))$. As the hypothesis of $\exists \PWF_{\Nat}$ holds for $Q_{\alpha}$ and $P_{\alpha}$, we get $x, n \in \Nat$, such that $x =_{\Nat} \alpha(n) \ \& \ \alpha(n) \leq \alpha(n+1)\big)$.\\
(iv) We work as in the classical proof of the implication $\forall \PWF_{\Nat}\To \exists \PWF_{\Nat}$ in case (i).
\end{proof}


The principle $\exists \PWF_{\Nat}$ can be used in order to avoid classical logic in the proof of fundamental arithmetical facts\footnote{I would like to thank P. Schuster for suggesting to me Proposition~\ref{prp: prime} as a case-study for $\exists \PWF_{\Nat}$.}. A standard classical proof of the divisibility of a natural number $n > 1$ by a prime number $p$ employs $\LNP$: if $\D P$ is the set of prime numbers, then one supposes that the set $A := \{x \in \Nat \mid  x > 1 \ \& \ \forall_{p \in \D P}(p \nmid x)\}$ is non-empty, and by $\LNP$ $A$ has a least element, through which a contradiction is induced. Let $\Prime(x) :\TOT x > 1 \ \& \ \forall_{y \in \Nat}(y \mid x \To y =_{\Nat} 1 \vee y =_{\Nat} x)$, and $\Coprime(x) :\TOT  x > 1 \ \& \ \exists_{y \in \Nat}(y \mid x \ \& \ y \neq_{\Nat} 1 \ \& \ y \neq_{\Nat}  x) \TOT x > 1 \ \& \ \exists_{y \in \Nat}(y \mid x \ \& \ 1 < y \ \& \ y <  x)$. Clearly\footnote{More generally, for every formula $A$ in primitive recursive arithmetic we have that $A \vee \neg_{\Nat} A$ is provable within it (see~\cite{TvD88}, p.~125).}, $\forall_{x > 1}\big(\Prime(x) \vee \Coprime(x)\big)$. If $x > 1$, such that $\Prime(x)$, then $x \in \D P$ with $x \mid x$. Thus, it suffices to prove the coprime case.

\begin{proposition}[$\MIN$]\label{prp: prime}
If $n \in \Nat$, such that $\Coprime(n)$, then there is $p \in \D P$ with $p \mid n$.
\end{proposition}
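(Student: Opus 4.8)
The plan is to invoke $\exists \PWF_{\Nat}$ with a pair of predicates encoding the classical descent along divisors of $n$. I would set $Q(x) :\TOT x > 1 \ \& \ x \mid n$, expressing that $x$ is a nontrivial divisor of $n$, and $P(x) :\TOT \Prime(x) \ \& \ x \mid n$, expressing that $x$ is a prime divisor of $n$. Both formulas respect $=_{\Nat}$, since divisibility, the order $<$, and the predicate $\Prime$ are extensional on $\Nat$, so they are legitimate inputs to the principle. The conclusion $\exists_{x \in \Nat}P(x)$ will then furnish an $x$ with $\Prime(x)$ and $x \mid n$, i.e.\ an element $p := x$ of $\D P$ dividing $n$, which is exactly the desired statement.

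To apply the principle I must check its two hypotheses. First, $\exists_{x \in \Nat}Q(x)$: since $\Coprime(n)$ holds, there is $y$ with $y \mid n$ and $1 < y < n$; in particular $y > 1$ and $y \mid n$, so $Q(y)$ holds. Second, I must establish $\forall_{x \in \Nat}\big(Q(x) \To \big[P(x) \vee \exists_{y \in \Nat}\big(y < x \ \& \ Q(y)\big)\big]\big)$. Fixing $x$ with $Q(x)$, so $x > 1$ and $x \mid n$, I would split on the decidable dichotomy $\forall_{x > 1}\big(\Prime(x) \vee \Coprime(x)\big)$ recorded just before the statement. If $\Prime(x)$, then $P(x)$ holds directly. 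If $\Coprime(x)$, then there is $y$ with $y \mid x$, $1 < y$, and $y < x$; transitivity of divisibility turns $y \mid x$ and $x \mid n$ into $y \mid n$, so $Q(y)$ holds with $y < x$, which is the second disjunct.

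With both hypotheses verified, $\exists \PWF_{\Nat}$ delivers $\exists_{x \in \Nat}P(x)$ and the proof is complete. The entire argument stays within $\MIN$: the dichotomy on $x > 1$ is proved in minimal logic, transitivity of divisibility uses neither negation nor ex falso, and $\exists \PWF_{\Nat}$ is precisely the constructive surrogate for the appeal to $\LNP$ in the classical proof. I do not expect a genuine obstacle here; the only point needing a line of care is the extensionality of $P$ and $Q$ demanded by the principle, but this is immediate from the extensionality of $\mid$, $<$, and $\Prime$ on $\Nat$. In effect, the hand-run descent of the classical argument is exactly what the principle internalizes, so the work reduces to packaging that descent into the correct $P$ and $Q$.
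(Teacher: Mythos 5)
Your proof is correct and is essentially identical to the paper's: the paper uses the same predicates $Q_n(x) :\TOT 1 < x \ \& \ x \mid n$ and $P_n(x) :\TOT \Prime(x) \ \& \ x \mid n$, verifies the existential hypothesis from $\Coprime(n)$, and runs the same case split on $\Prime(x) \vee \Coprime(x)$ with transitivity of divisibility supplying the descent step. No differences worth noting.
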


\begin{proof}
Let $Q_n(x) :\TOT 1 < x \ \& \ x \mid n$ and $P_n(x) :\TOT \Prime(x) \ \& \ x \mid n$. Clearly, $\Coprime(n) \To \exists_{x \in \Nat}Q_n(x)$. Let $x \in \Nat$, such that $Q_n(x)$. If $\Prime(x)$, then $P_n(x)$. If $\Coprime(x)$, then there is $y \in \Nat$ with $1 < y < x$ and $y \mid x$. Since by hypothesis $x \mid n$, we have that $y \mid n$, hence $y < x$ with $Q_n(y)$. By the conclusion of $\exists \PWF_{\Nat}$ there is $x \in \Nat$, such that $\Prime(x)$ and $x \mid n$.
\end{proof}

\section{Complemented subsets of $\Nat$}
\label{sec: cs}

Mathematics is more informative when weak negation is avoided in the definition of its concepts. If weak negation is involved in the definition of a mathematical concept, a strong version of this concept that avoids weak negation suits better to constructive study. 
For example, if $A \subseteq \Nat$, its \textit{weak} and \textit{strong complement} are the following extensional subsets of $\Nat$, respectively
$$A^{\neg_{\Nat}} :=   \big\{x \in \Nat \mid \forall_{a \in A}\big(\neg_{\Nat} (x =_{\Nat} a)\big)\big\},$$
$$A^{\neq_{\Nat}} := \{x \in \Nat \mid \forall_{a \in A}(x \neq_{\Nat} a)\}.$$
The \textit{weak empty} subset of $\Nat$ and the \textit{strong empty} subset of $\Nat$ are defined, respectively,  by
$$\emptyset_{\Nat} := \{x \in \Nat \mid \neg(x =_{\Nat} x)\},$$
$$\emptys_{\Nat} := \{x \in \Nat \mid x \neq_{\Nat} x\}.$$
We call	$A \in \C E(X)$ \textit{weakly empty}, if $A \subseteq \emptyset_{\Nat}$, and \textit{strongly empty}, if $A \subseteq \emptys_{\Nat}$.  Of course, due to the equivalence $\neg_{\Nat}(x =_{\Nat} y) \TOT x \neq_{\Nat} y$ the weak and the strong versions of these concepts for $\Nat$ coincide, although this is not the case for an arbitrary set with an inequality. Here, we keep the distinction, in order to be compatible with the more general theory of sets with an inequality in $\BISH$. 
The \textit{strong overlap} relation between subsets $A, B$ of $\Nat$ is defined by 
$$A \between B :\TOT \exists_{x \in A} \exists_{y \in B}\big(x =_{\Nat} y\big).$$
In section~\ref{sec: CLNP} we formulate 
$\CLNP$
for \textit{complemented subsets} of $\Nat$ i.e., pairs $\B A := (A^1, A^0)$ of extensional subsets $A^1, A^0$ of $\Nat$ which are \textit{strongly disjoint}\footnote{\textit{Weakly complemeted subsets} of a set $X$ are defined as pairs $(A^1, A^0)$ of subsets of $X$ that are \textit{weakly disjoint} i.e., $\forall_{x \in A^1}\forall_{y \in A^0}\big(\neg(x =_{X} y)\big)$.}, in symbols $A^1 \Disj A^0$, where,  $$A^1 \Disj A^0 :\TOT \forall_{x \in A^1}\forall_{y \in A^0}\big(x \neq_{\Nat} y\big).$$
We call $\B A$ \textit{total}, if $\dom(\B A) := A^1 \cup A^0 = X$. If $n \in \Nat$, then by $(\Ineq_4)$ the \textit{complemented point} $\B n := \big(\{n\}, \{n\}^{\neq_{\Nat}}\big)$ is a total complemented subset of $\Nat$. 	We denote by $\C E^{\Disj}(\Nat)$ the totality of
 complemented subsets of $\Nat$.
 If $\B A, \B B \in \C E^{\Disj}(\Nat)$, let
 $$\B A \subseteq \B B : \TOT A^1 \subseteq B^1 \ \& \ B^0 \subseteq A^0, \ \ \ \ \ \B A =_{\mathsmaller{\C E^{\mathsmaller{\Disj}} (\Nat)}} \B B : \TOT \B A \subseteq \B B \ \& \ \B B \subseteq \B A.$$
If the elements of $A^1$ are the ``provers'' of $\B A$ and the elements of $A^0$ are the ``refuters'' of $\B A$, then the inclusion $\B A \subseteq \B B$ means\footnote{See also~\cite{Sh22} for a connection between Bishop's complemented subsets and the categorical Chu construction.} that all provers of $\B A$ prove $\B B$ and all refuters of $\B B$ refute $\B A$ i.e., $\B B$ has more provers and less refuters than $\B A$. The pair $\big(\{n\}, A^0\big)$, where $A^0$ is a proper subset of $\{n\}^{\neq_{\Nat}}$, is a simple example of a non-total complemented subset of $\Nat$. Next we show that there are complemented subsets of $\Nat$ that we cannot accept constructively to be total, although classically they are.

\begin{example}\label{ex: nontotal}
\normalfont
If $P$ is a formula as it is indicated in $\PEM_{\Nat}$, let the following subsets of $\Nat$:
\begin{align*}
P^1 & := \{x \in \Nat \mid x =_{\Nat} 1\} \cup \{x \in \Nat \mid x = 0 \ \& \ P\} \\
& =_{\mathsmaller{\C E(\Nat)}} \{x \in \Nat \mid x =_{\Nat} 1 \vee (x =_{\Nat} 0 \ \& \ P)\},
\end{align*}
\begin{align*}
P^0 & := \{x \in \Nat \mid x =_{\Nat} 1\}^{\neq_{\Nat}} \cap \{x \in \Nat \mid x \neq_{\Nat} 0 \vee \neg_{\Nat} P\}\\
&  =_{\mathsmaller{\C E(\Nat)}} \{x \in \Nat \mid x \neq _{\Nat} 1 \ \& \ (x \neq_{\Nat} 0 \vee \neg_{\Nat} P)\}\\
& =_{\mathsmaller{\C E(\Nat)}} \{x \in \Nat \mid (x \neq _{\Nat} 1 \ \& \ x \neq_{\Nat} 0) \vee (x \neq _{\Nat} 1 \ \& \ 
\neg_{\Nat} P)\}.
\end{align*}
First, we show that $P^1 \Disj P^0$. Let $x^1 \in P^1$ and $x^0 \in P^0$. If $x^1 =_{\Nat} 1$, then let first $x^0 \neq_{\Nat} 1 \ \& \ x^0 \neq_{\Nat} 0$. By the extensionality of $\neq_{\Nat}$ we get $x^1 \neq_{\Nat} x^0$. If $x^0 \neq _{\Nat} 1 \ \& \ \neg_{\Nat} P$, then we work similarly. If $x^1 =_{\Nat} 0 \ \& \ P$, then let first $x^0 \neq_{\Nat} 1 \ \& \ x^0 \neq_{\Nat} 0$. Again by the extensionality of $\neq_{\Nat}$ we get $x^1 \neq_{\Nat} x^0$.  If $x^0 \neq _{\Nat} 1 \ \& \ \neg_{\Nat} P$, then by $P$ and $\neg_{\Nat} P$ we get $\bot_{\Nat}$. By Proposition~\ref{prp: botnat}(iv) we get the required inequality $x^1 \neq_{\Nat} x^0$. Next, we show that if $0 \in P^1 \cup P^0$, then $P \vee \neg_{\Nat} P$ holds. If $0 \in P^1$, then $P$ holds. If $0 \in P^0$, then $0 \neq_{\Nat} 1 \ \& \ \neg_{\Nat} P$ holds, hence $\neg_{\Nat} P$ holds. 
\end{example}

\begin{definition}\label{def: lpart}
If $\B A := (A^1, A^0) \in \C E^{\Disj}(\Nat)$
 and $x \in \dom(\B A)$, then the \textit{downset } $\C D_{\B A}(x)$ of $x$ in $\B A$ is 
$$\C D_{\B A}(x) := \{y \in \dom(\B A) \mid y < x\}.$$
\end{definition}

We can show within $\MIN$ that $\C D_{\B A}(0)$ is strongly empty i.e., $\C D_{\B A}(0) \subseteq \emptys_{\Nat}$: If $y \in A^1 \cup A^0$ with $y < 0$, then, since $y \geq 0$, we get $y < y$, and hence by (I3) we get $y \neq_{\Nat} y$. The inclusion $\emptys_{\Nat} \subseteq \C D_{\B A}(0)$ can be shown within $\MIN$ if\footnote{In~\cite{MWP25} the complemented subsets of $\Nat$ that satisfy the stronger property ``the implication $\bot_{\Nat} \To A^1(x) \wedge A^0(x)$ is provable in $\MIN$'' are shown to form a swap algebra of type $(\tii)$, a generalisation of a Boolean algebra (see~\cite{MWP24}).} the implication $\bot_{\Nat} \To A^1(x) \vee A^0(x)$ can be shown within $\MIN$; if $x \in \Nat$ with $x \neq_{\Nat} x$, then by $(\Ineq_1)$ we get $\bot_{\Nat}$, and hence by hypothesis $x \in \dom(\B A)$. Moreover, if $x \neq_{\Nat} x$, then by (I4) we get $x < x$, and since $0 =_{\Nat} x$ (Proposition~\ref{prp: botnat}(ii)), by the extensionality of $<$ we get $x < 0$. Clearly, if $\B A$ is total, then the equality $\C D_{\B A}(0) =_{\mathsmaller{}\mathsmaller{}\C E(\Nat)} \emptys_{\Nat}$ is shown within $\MIN$.

%
%
 The proof of Proposition~\ref{prp: invcs} is straightforward. In the general case of a function $f \colon X \to Y$ we need $f$ to be strongly extensional, in order to inverse the complemented subsets of $Y$. In the case of a function $f \colon \Nat \to \Nat$ though, by Proposition~\ref{prp: se}(i) strong extensionality of $f$ is provable.

\begin{proposition}\label{prp: invcs}
Let $f \colon \Nat \to \Nat$ be a function. If $\B B := (B^1,B^0)$ is in $\C E^{\Disj}(\Nat)$, then $f^{-1}(\B B) := \big(f^{-1}(B^1), f^{-1}(B^0)\big)$ is in $\C E^{\Disj}(\Nat)$.
\end{proposition}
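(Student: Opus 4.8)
The plan is to check directly the two conditions that define membership in $\C E^{\Disj}(\Nat)$: that the two component subsets $f^{-1}(B^1)$ and $f^{-1}(B^0)$ are extensional subsets of $\Nat$, and that they are strongly disjoint.

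First I would dispose of extensionality. Writing $B^1 = \{z \in \Nat \mid R(z)\}$ with $R$ an extensional formula, the preimage unfolds as $f^{-1}(B^1) = \{x \in \Nat \mid R(f(x))\}$. Since $f$ is a function it respects $=_{\Nat}$, that is $x =_{\Nat} y \To f(x) =_{\Nat} f(y)$; composing this with the extensionality of $R$ shows that $R(f(x))$ is extensional in $x$, so $f^{-1}(B^1)$ is a legitimate extensional subset. The identical argument applies to $B^0$. This part is mere transport along $f$ and uses no inequality at all.

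The substantive step is the strong disjointness $f^{-1}(B^1) \Disj f^{-1}(B^0)$. Given $x \in f^{-1}(B^1)$ and $y \in f^{-1}(B^0)$, we have $f(x) \in B^1$ and $f(y) \in B^0$; the hypothesis $B^1 \Disj B^0$ then yields $f(x) \neq_{\Nat} f(y)$. What remains is to pull this inequality back to $x \neq_{\Nat} y$, and this is precisely the point where strong extensionality of $f$ enters: by its very definition, $f(x) \neq_{\Nat} f(y) \To x \neq_{\Nat} y$. For an arbitrary $f \colon X \to Y$ one would have to \emph{assume} strong extensionality, but here Proposition~\ref{prp: se}(i) guarantees it for every $f \colon \Nat \to \Nat$, so $x \neq_{\Nat} y$ follows and the disjointness clause is verified.

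The only genuine obstacle is thus the disjointness clause, and it is resolved by the fact --- special to $\Nat$ --- that every endofunction of $\Nat$ is strongly extensional. Once Proposition~\ref{prp: se}(i) is invoked, the verification is immediate and needs no principle beyond the basic properties of $\neq_{\Nat}$ already recorded; in particular the proof stays within minimal logic, since $A^1 \Disj A^0$ is a universally quantified implication into a strong inequality and never appeals to $\EFQ_{\Nat}$.
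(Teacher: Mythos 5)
Your proof is correct and follows exactly the route the paper indicates for this proposition: the only substantive point is pulling the inequality $f(x) \neq_{\Nat} f(y)$ back to $x \neq_{\Nat} y$, which the paper likewise resolves by invoking Proposition~\ref{prp: se}(i) to get strong extensionality of every $f \colon \Nat \to \Nat$ for free. The extensionality check for the preimages is routine transport, as you note, so nothing is missing.
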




\section{The constructive least number principle $\CLNP$}
\label{sec: CLNP}

\textit{Throughout this section} $\B A := (A^1, A^0)$ \textit{is a complemented subset of} $\Nat$.

\begin{definition}\label{def: mcs}
We call $\B A$ downset located, if
$$\forall_{x^1 \in A^1}\big(\C D_{\B A}(x^1) \subseteq A^0 \vee \C D_{\B A}(x^1) \between A^1\big).$$
\end{definition}

\begin{example}\label{ex: lpl}
\normalfont
(i) If $\B A$ is total, then $\B A$ is downset located. If $0 \in A^1$, then $\C D_{\B A}(0) =_{\C E(\Nat)} \emptys_{\Nat}$, and with $\EFQ$ we get\footnote{For many concrete subsets $A^0$ of $\Nat$ the inclusion $\emptys_{\Nat} \subseteq A^0$ can be shown within $\MIN$. See the proof in Example~\ref{ex: lpl}(ii).} $\emptys_{\Nat} \subseteq A^0$. If $x^1$ is a non-zero element of $A^1$, then $\C D_{\B A}(0) =_{\C E(\Nat)} \{0, \ldots, x^1-1\}$ and the required disjunction holds because for every $i \in \{0, \ldots, x^1-1\}$ we have that $i \in A^1 \vee i \in A^0$. Using this argument and classical logic, then all complemented subsets of $\Nat$ are downset located.\\
(ii) Let $A^1 := \{x \in \Nat \mid x =_{\Nat} 2\} =: \{2\}$ and $A^0 := \{3\}$. Then $\B A$ is downset located, but not total. Working as in example (i), we have that $\C D_{\B A}(2) =_{\C E(\Nat)} \emptys_{\Nat}$. The inclusion $\emptys_{\Nat} \subseteq A^0$ is shown within $\MIN$ as follows: if $x \in \emptys_{\Nat}$, then by $(\Ineq_1)$ we get $\bot_{\Nat}$, and by Proposition~\ref{prp: botnat}(ii)
 $3 =_{\Nat} 0 =_{\Nat} x$.\\
 (iii) The complemented subset $\B P := (P^1, P^0)$, where $P^1, P^0$ are defined in Example~\ref{ex: nontotal}, cannot be accepted constructively to be downset located. As $1 \in P^1$, its downset  $\C D_{\B P}(1) := \{x \in P^1 \cup P^0 \mid x < 0\}$ overlaps with $P^1$ only if $0 \in P^1$ and $P$ holds, and it is included in $P^0$ if there is $x \in P^1 \cup P^0$, such that $x < 1$ and $x \in P^0$, hence $\neg_{\Nat} P$.
\end{example}

Next, we show that there is a plethora of downset located subsets of $\Nat$, induced by appropriate monotone functions from $\Nat$ to $\Nat$. 

\begin{proposition}\label{prp: invlpc}
Let $f \colon \Nat \to \Nat$ be a monotone function and $\B B$ a downset located complemented subset of $\Nat$. If $f$ is onto $B^1$, then  $f^{-1}(\B B)$ is also downset located.
\end{proposition}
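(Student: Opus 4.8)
The plan is to verify the locatedness condition of Definition~\ref{def: mcs} directly for the complemented subset $\B A := f^{-1}(\B B) = \big(f^{-1}(B^1), f^{-1}(B^0)\big)$, which lies in $\C E^{\Disj}(\Nat)$ by Proposition~\ref{prp: invcs}. Writing $A^1 := f^{-1}(B^1)$ and $A^0 := f^{-1}(B^0)$, and using that preimage commutes with union so that $\dom(\B A) = f^{-1}(\dom(\B B))$, I would fix an arbitrary $z \in A^1$ and observe $f(z) \in B^1$. The whole argument is a transfer along $f$: I apply the locatedness of $\B B$ at the point $f(z)$ and carry the resulting disjunction back to $z$. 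Since $\B B$ is downset located and $f(z) \in B^1$, I obtain $\C D_{\B B}(f(z)) \subseteq B^0$ or $\C D_{\B B}(f(z)) \between B^1$, and I treat the two disjuncts separately.

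In the inclusion case I would claim $\C D_{\B A}(z) \subseteq A^0$. Given $y \in \C D_{\B A}(z)$, that is $y \in \dom(\B A)$ with $y < z$, I have $f(y) \in \dom(\B B)$, and monotonicity of $f$ gives $f(y) < f(z)$, so $f(y) \in \C D_{\B B}(f(z))$; the case hypothesis then yields $f(y) \in B^0$, i.e. $y \in A^0$. This disjunct uses only the forward, order-preserving property of $f$.

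In the overlap case I would claim $\C D_{\B A}(z) \between A^1$. Extensionality of $B^1$ turns the overlap $\C D_{\B B}(f(z)) \between B^1$ into a genuine witness $w \in B^1$ with $w < f(z)$. This is precisely where the hypothesis that $f$ is onto $B^1$ enters: I pick $a \in \Nat$ with $f(a) =_{\Nat} w$, so $a \in f^{-1}(B^1) = A^1$, and from $f(a) =_{\Nat} w < f(z)$ I get $f(a) < f(z)$, whence strong monotonicity (Proposition~\ref{prp: se}(ii)) delivers $a < z$. Thus $a \in \C D_{\B A}(z)$ together with $a \in A^1$ witnesses $\C D_{\B A}(z) \between A^1$.

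I expect the overlap disjunct to be the crux, and the place where the surjectivity hypothesis is indispensable: without $f$ being onto $B^1$ one can still detect, through $f(z)$, that $\B B$ has a prover strictly below $f(z)$, but there is no way to manufacture a prover of $\B A$ strictly below $z$, so the conclusion $\C D_{\B A}(z) \between A^1$ would fail. The clean division of labour is that monotonicity settles the inclusion disjunct by pushing forward, while surjectivity together with strong monotonicity settles the overlap disjunct by pulling back; since every step---the case split on a constructive disjunction, the extraction of the surjectivity witness, and the implications of Proposition~\ref{prp: se}---is constructive, the argument stays within the intended logic.
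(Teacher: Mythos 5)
Your proof is correct and follows essentially the same route as the paper's: fix $z$ in the preimage of $B^1$, apply locatedness of $\B B$ at $f(z)$, settle the inclusion disjunct by monotonicity and the overlap disjunct by surjectivity onto $B^1$ plus strong monotonicity via Proposition~\ref{prp: se}(ii). No substantive differences.
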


\begin{proof}
By the hypothesis on $\B B$ we have that $\forall_{y^1 \in B^1}\big(\C D_{\B B}(y^1) \subseteq B^0 \vee \C D_{\B B}(y^1) \between B^1\big)$. Let $x^1 \in f^{-1}(B^1) \TOT f(x^1) \in B^1$. Hence, by the hypothesis on $\B B$ we get
$$\C D_{\B B}(f(x^1)) \subseteq B^0 \ \vee \ \C D_{\B B}f((x^1)) \between B^1.$$
We show that 
$$\C D_{f^{-1}(\B B}(x^1) \subseteq f^{-1}(B^0) \ \vee \ \C D_{f^{-1}(\B B}(x^1) \between f^{-1}(B^1).$$
First, we suppose that $\C D_{\B B}(f(x^1)) \subseteq B^0$ i.e., $\{u \in \dom(\B B) \mid u < f(x^1)\} \subseteq B^0$, and we show that $\C D_{f^{-1}(\B B}(x^1) \subseteq f^{-1}(B^0)$.
For that, let $w \in f^{-1}(B^1) \cup f^{-1}(B^0) \TOT f(w) \in \dom(\B B)$, such that $w < x^1$. By monotonicity of $f$ we get $f(w) < f(x^1)$, and hence $f(w) \in B^0$ i.e., $w \in f^{-1}(B^0)$. Hence, we showed that $\C D_{f^{-1}(\B B}(x^1) \subseteq f^{-1}(B^0)$. Next, we suppose that $\C D_{\B B}f((x^1)) \between B^1$ i.e., there is $u^1 \in B^1$ with $u^1 < f(x^1)$, and we show that $\C D_{f^{-1}(\B B}(x^1) \between f^{-1}(B^1)$ i.e., we find $w \in  f^{-1}(B^1) \cup f^{-1}(B^0)$ with $w < x^1$ and $f(w) \in B^1$. Since $f$ is onto $B^1$, there is $w \in \Nat$, such that $f(w) =_{\Nat} u^1$ i.e., $w \in f^{-1}(B^1)$. By the extensionalty of $<$ we get $f(w) < f(x^1)$, while by the extensionality of $B^1$ we get $f(w) \in B^1$. Since by 
Proposition~\ref{prp: se}(ii) $f$ is strongly monotone, we get $w < x^1$, and $\C D_{f^{-1}(\B B}(x^1) \between f^{-1}(B^1)$ is shown.
\end{proof}

It is also straightforward to show that if $f \colon \Nat \to \Nat$ is monotone, then
$$f\big(\C D_{f^{-1}(\B B}(x)\big) \subseteq \C D_{\B B}(f(x)) \ \ \ \& \ \ \  f^{-1}\big(\C D_{\B B}(f(x))\big) \subseteq \C D_{f^{-1}(\B B}(x).$$

\begin{definition}\label{def: min}
A natural number $\mu$ is a least element\footnote{If $\B A$ is total, then this definition is the complemented subset version of the standard definition of a least element $\mu$ of a subset $A$ of $\Nat$: $\mu \in A \ \& \ \forall_{x \in \Nat}\big(x < \mu \To \neg_{\Nat}(x \in A)\big)$ (see also~\cite{TvD88}, p.~129).} of $\B A$ if and only if 
$$\mu \in A^1 \ \& \ \forall_{x \in \dom(\B A)}\big(x < \mu \To x \in A^0\big).$$
\end{definition}

\begin{corollary}[$\MIN$]\label{cor: min1}
Let $\mu, \nu \in \Nat$, such that $\mu$ and $\nu$ are least elements of $\B A$.\\[1mm]
\normalfont 
(i) \itshape $\forall_{x \in A^1}\big(x \geq \mu\big)$.\\[1mm]
\normalfont (ii) \itshape $\mu =_{\Nat} \nu$.\\[1mm]
\normalfont (iii) \itshape If $0 \in A^1$, then $0$ is the least element of $\B A$.
\end{corollary}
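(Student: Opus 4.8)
The plan is to prove the three parts of Corollary~\ref{cor: min1} directly from Definition~\ref{def: min}, working entirely within $\MIN$ and using the order axioms $(\mathrm{I}1)$--$(\mathrm{I}5)$ together with the disjointness $A^1 \Disj A^0$ of the complemented subset $\B A$. The key structural fact that drives all three parts is that if $\mu$ is a least element, then every $x \in A^1$ satisfies $x \geq \mu$: this is essentially a reformulation that turns the ``downward'' condition $\forall_{x \in \dom(\B A)}(x < \mu \To x \in A^0)$ into an ``upward'' one, and it is exactly part (i).

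For part (i), let $x \in A^1$ and use the trichotomy-style axiom $(\mathrm{I}1)$, namely $x < \mu \vee x \geq \mu$. In the case $x \geq \mu$ we are done. In the case $x < \mu$, since $x \in A^1 \subseteq \dom(\B A)$, the defining condition of $\mu$ gives $x \in A^0$. But then $x \in A^1$ and $x \in A^0$ simultaneously, so by $A^1 \Disj A^0$ we get $x \neq_{\Nat} x$, hence $\bot_{\Nat}$ by $(\Ineq_1)$. Using the rule $[(P \vee Q) \ \&\ \neg_{\Nat} P] \To Q$ (the same rule invoked repeatedly in Proposition~\ref{prp: botnat}), we discharge the impossible disjunct and conclude $x \geq \mu$ without ever appealing to $\EFQ_{\Nat}$.

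For part (ii), apply part (i) twice. Since $\nu \in A^1$ and $\mu$ is least, part (i) gives $\nu \geq \mu$; symmetrically, since $\mu \in A^1$ and $\nu$ is least, $\mu \geq \nu$. By antisymmetry of $\leq$ on $\Nat$ (equivalently, combining $\nu \geq \mu$ with $\mu \geq \nu$ via $(\mathrm{I}2)$ to rule out both strict cases) we obtain $\mu =_{\Nat} \nu$. For part (iii), assume $0 \in A^1$. The membership condition $0 \in A^1$ is the first conjunct of Definition~\ref{def: min}. For the second conjunct, let $x \in \dom(\B A)$ with $x < 0$; but $x \geq 0$ always holds in $\Nat$, so $x < 0$ gives $x < x$, whence $x \neq_{\Nat} x$ by $(\mathrm{I}3)$ and $\bot_{\Nat}$ by $(\Ineq_1)$. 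Since we only need to conclude $x \in A^0$ and our hypothesis already yields a contradiction, this case is vacuous; the cleanest formulation is to note that $x < 0$ is refutable, so the implication $x < 0 \To x \in A^0$ holds within $\MIN$ by the rule that a false antecedent makes an implication derivable (which here does not require full $\EFQ_{\Nat}$ because the conclusion we are implicitly using is the negation of the antecedent itself, established above for $\C D_{\B A}(0)$).

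The only genuine subtlety I anticipate is bookkeeping in part (iii): one must be careful that proving $x < 0 \To x \in A^0$ within $\MIN$ really does not smuggle in $\EFQ_{\Nat}$, since in general deriving an arbitrary $x \in A^0$ from $\bot_{\Nat}$ would require ex falso. The resolution, already prepared in the discussion following Definition~\ref{def: lpart}, is that $\C D_{\B A}(0)$ is \emph{strongly empty}: no $x \in \dom(\B A)$ with $x < 0$ exists, because $x < 0$ yields $x \neq_{\Nat} x$ and hence $\bot_{\Nat}$ directly. Thus the universally quantified implication holds because its antecedent is uniformly refutable, and this refutation is itself within $\MIN$. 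Parts (i) and (ii) are routine applications of the order axioms and disjointness, so the main care is concentrated in presenting (iii) in a way that stays faithful to minimal logic.
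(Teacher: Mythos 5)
Your parts (i) and (ii) are correct and coincide with the paper's own argument: dichotomy $x < \mu \vee x \geq \mu$, refutation of the first disjunct via $A^1 \Disj A^0$ and $(\Ineq_1)$, and the minimal-logic rule $[(P \vee Q) \ \& \ \neg_{\Nat} P] \To Q$; then antisymmetry for (ii).

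Part (iii), however, has a genuine gap exactly at the point you flag as a "subtlety". Having refuted the antecedent $x < 0$ (for $x \in \dom(\B A)$), you conclude that the implication $x < 0 \To x \in A^0$ "holds because its antecedent is uniformly refutable". That inference \emph{is} $\EFQ_{\Nat}$: in $\MIN$, deriving $\bot_{\Nat}$ from the antecedent gives you $\neg_{\Nat}(x < 0)$, not $x < 0 \To x \in A^0$, and your parenthetical that "the conclusion we are implicitly using is the negation of the antecedent itself" does not repair this, since the conclusion actually required is membership in $A^0$, an arbitrary extensional subset. The paper is explicit that this step is problematic: in Example~\ref{ex: lpl}(i) the inclusion $\emptys_{\Nat} \subseteq A^0$ is obtained only "with $\EFQ$" for a general $A^0$, and only for concrete choices of $A^0$ (as in Example~\ref{ex: lpl}(ii)) can it be pushed through in $\MIN$. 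The paper's proof of (iii) avoids the issue by a different route: since $x \in \dom(\B A)$, one has the \emph{disjunction} $x \in A^1 \vee x \in A^0$; assuming $x < 0$, the first disjunct $x \in A^1$ is refuted (as $x \geq 0$ gives $x < x$, hence $x \neq_{\Nat} x$ and $\bot_{\Nat}$), and then the same disjunctive-syllogism rule $[(P \vee Q) \ \& \ \neg_{\Nat} P] \To Q$ that you already used in part (i) lands you on $x \in A^0$ without any appeal to ex falso. Replacing your vacuity argument by this disjunctive syllogism fixes the proof and keeps it within $\MIN$.
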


\begin{proof}
(i) If $x \in A^1$, then by dichotomy
  $x < \mu \vee x \geq \mu$. If $x < \mu$, then by the definition of $\mu$ we get $x \in A^0$, hence $x \neq_{\Nat} x$, and consequently $\bot_{\Nat}$. Hence, we get $\neg_{\Nat}(x < \mu)$. Consequently, we get $x \geq \mu$.\\
(ii) As $\mu, \nu \in A^1$, by case (i) we have that $\nu \geq \mu$ and $\mu \geq \nu$, hence $\mu =_{\Nat} \nu$.\\[1mm]
(iii) Let $x \in A^1 \cup A^0$ i.e., $x \in A^1 \vee x \in A^0$. If $x < 0$, then the hypothesis $x \in A^1$ implies by (i) that $x \geq 0$, hence $x < x$ and $x \neq_{\Nat} x$. Consequently, we get $\bot_{\Nat}$ i.e., $\neg_{\Nat}(x \in A^1)$. Thus, $x \in A^0$. 
\end{proof}

The following equivalence is our constructive least number principle $\CLNP$. If $0 \in A^1$, then by Corollary~\ref{cor: min1}(iii) $0$ is the least element of $\B A$. 

\begin{theorem}[$\MIN$]\label{thm: CLNP}
	Let $a^1 \in A^1$ with $a^1 > 0$. The following are equivalent:\\[1mm]
	\normalfont (i) \itshape $\B A$ has a least element.\\[1mm]
	\normalfont (ii) \itshape $\B A$ is downset located.
\end{theorem}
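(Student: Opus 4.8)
The plan is to prove the two implications separately. The direction (i)$\To$(ii) is a direct unfolding of the definitions, while (ii)$\To$(i) is the essential application of $\exists \PWF_{\Nat}$, and is where I expect the real content to lie.

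For (i)$\To$(ii) I would assume $\B A$ has a least element $\mu$ and fix $x^1 \in A^1$. By Corollary~\ref{cor: min1}(i) I get $x^1 \geq \mu$, so (I2) lets me split into $x^1 =_{\Nat} \mu$ and $x^1 > \mu$. In the first case extensionality of $<$ identifies $\C D_{\B A}(x^1)$ with $\C D_{\B A}(\mu)$, and the defining property of $\mu$ gives $\C D_{\B A}(\mu) \subseteq A^0$, so the first disjunct of downset locatedness holds. In the second case $\mu < x^1$ with $\mu \in A^1 \subseteq \dom(\B A)$ places $\mu$ in $\C D_{\B A}(x^1)$, witnessing $\C D_{\B A}(x^1) \between A^1$, the second disjunct.

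For (ii)$\To$(i) I would set $Q(x) :\TOT x \in A^1$ and $P(x) :\TOT x \in A^1 \ \& \ \C D_{\B A}(x) \subseteq A^0$, the latter being exactly the statement, by Definition~\ref{def: min}, that $x$ is a least element of $\B A$ (unfolding $\C D_{\B A}(x) \subseteq A^0$ as $\forall_{y \in \dom(\B A)}(y < x \To y \in A^0)$). Both predicates respect $=_{\Nat}$ since $A^1$, $A^0$ and $<$ are extensional, and the hypothesis $a^1 \in A^1$ supplies the seed $\exists_{x \in \Nat} Q(x)$. To verify the step hypothesis of $\exists \PWF_{\Nat}$, I would take $x$ with $Q(x)$, i.e.\ $x \in A^1$, and feed it into downset locatedness: the alternative $\C D_{\B A}(x) \subseteq A^0$ is literally $P(x)$, while the alternative $\C D_{\B A}(x) \between A^1$ produces $z \in \C D_{\B A}(x)$ and $w \in A^1$ with $z =_{\Nat} w$, whence $z < x$ and, by extensionality of $A^1$, $z \in A^1$, giving the backward witness $\exists_{y < x} Q(y)$. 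The conclusion $\exists_{x \in \Nat} P(x)$ of $\exists \PWF_{\Nat}$ then delivers a least element.

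The hard part will not be any computation but rather recognising the right dictionary: that downset locatedness is precisely engineered so that its two disjuncts match the two disjuncts in the hypothesis of $\exists \PWF_{\Nat}$ --- ``$\C D_{\B A}(x) \subseteq A^0$'' collapsing to ``$x$ is least'' and ``$\C D_{\B A}(x) \between A^1$'' producing the strictly smaller prover for the backward step. Once the predicates $P$ and $Q$ are chosen this way, no instance of $\EFQ_{\Nat}$ or excluded middle is invoked, consistent with the $\MIN$ label of the theorem.
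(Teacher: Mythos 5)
Your proposal is correct and follows essentially the same route as the paper: the same use of Corollary~\ref{cor: min1}(i) and (I2) for (i)$\To$(ii), and the same choice $Q(x) :\TOT x \in A^1$, $P(x) :\TOT x \in A^1 \ \& \ \C D_{\B A}(x) \subseteq A^0$ fed into $\exists \PWF_{\Nat}$ with seed $a^1$ for (ii)$\To$(i). Your extra remark unpacking the overlap witness via extensionality of $A^1$ is a detail the paper leaves implicit, but the argument is the same.
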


\begin{proof}
(i) $\To$ (ii): Let $\mu \in A^1$, such that $\forall_{x \in \dom(\B A)}\big(x < \mu \To x \in A^0\big)$.
Let $x^1 \in A^1$. By Corollary~\ref{cor: min1}(i) we get $x^1 \geq \mu$. By (I2) we have that $x^1 =_{\Nat} \mu$ or $x^1 > \mu$. If $x^1 =_{\Nat} \mu$, then by Definition~\ref{def: min} and the extensionality of $<$ we have that $\C D_{\B A}(x^1) =_{\mathsmaller{\mathsmaller{\C E(\Nat})}} \C D_{\B A}(\mu) \subseteq A^0$. If $x^1 > \mu$, then $\mu \in \C D_{\B A}(x^1) \cap A^1$ i.e., $\C D_{\B A}(x^1) \between A^1$.\\
(ii) $\To$ (i) (informally): By hypothesis we have that $\C D_{\B A}(a^1) \subseteq A^0 \vee \C D_{\B A}(a^1) \between A^1$. If $\C D_{\B A}(a^1) \subseteq A^0$, then $a^1$ is the least element of $\B A$. If $\C D_{\B A}(a^1) \between A^1$, let $a^2 \in A^1$ with $a^2 \in \C D_{\B A}(a^1) \cap A^1$ i.e., $a^2 \in A^1$ and $a^2 < a^1$. Again $\C D_{\B A}(a^2) \subseteq A^0 \vee \C D_{\B A}(a^2) \between A^1$, and we repeat the previous argument. After at most $(a^1 + 1)$-number of steps, we will have found the least element of $\B A$. \\
(ii) $\To$ (i) (formally): Let $Q_{\B A}(x) :\TOT x \in A^1$ and $P_{\B A}(x) :\TOT x \in A^1 \ \& \ \C D_{\B A}(x) \subseteq A^0$. By hypothesis we have that $Q_{\B A}(a^1)$. Let $x \in \Nat$ with $x \in A^1$. Since $\B A$ is downset located, we get $\C D_{\B A}(x) \subseteq A^0 \vee \C D_{\B A}(x) \between A^1$. In the first case, we get immediately $P_{\B A}(x)$. In the second case, we get some $y \in \Nat$ with $y < x$ and $y \in A^1 \TOT: Q_{\B A}(y)$. Hence, by $\exists \PWF_{\Nat}$ ther is $x \in \Nat$, such that $x \in A^1$ and $\C D_{\B A}(x) \subseteq A^0$. Clearly, $x$ is then the least element of $\B A$.
\end{proof}

The above proof of the implication (ii) $\To$ (i) can be seen as the constructive content of the corresponding classical proof of the least number principle for an arbitrary non-empty subset of $\Nat$, which employs the principle of the exluded middle. We can also use $\CLNP$ directly, in order to prove Proposition~\ref{prp: prime}. If $n \in \Nat$ with $n > 1$, let $\B P(n) := \big(P^1(n), P^0(n)\big)$, where
$$P^1(n) := \{x > 1 \mid x \mid n \ \& \ \Coprime(x)\}, \ \ \ P^0(n) := \{x > 1 \mid x \mid n \ \& \ \Prime(x)\}.$$
Clearly, $P^1(n) \Disj P^0(n)$. If $\Coprime(n)$, then $n > 0$, and trivially $n \in P^1(n)$. We show that $\B P(n)$ is downset located. If $x^1 \in P^1(n)$, then the disjunction $\C D_{\B P(n)}(x^1) \subseteq P^0 \vee \C D_{\B P(n)}(x^1) \between P^1$ follows by the decidability $(D) \ \forall_{x > 1}(\Prime(x) \vee \Coprime(x))$. By Theorem~\ref{thm: CLNP} $\B P(n)$ has a least element $\mu$ i.e., $\mu \in P^1(n)$ and $\forall_{x \in P^1(n) \cup P^0(n)}\big(x < \mu \To x \in P^0(n)\big)$. Since $\Coprime(\mu)$, there is 
$y \in \Nat$ with $1 < y < \mu$ and $y \mid \mu$. As $\mu \mid n$, we also have that $y \mid n$. By $(D)$ we have that $y \in P^1(n) \cup P^0(n)$. As $y < \mu$, we get $y \in P^0(n)$ i.e., $\Prime(y)$ and $y \mid n$.

\section{$\exists$-well-founded sets}
\label{sec: wfs}

In this section we generalise the $\exists$-well-foundedness of $\Nat$. As constructively every $\forall$-well-founded set in $\Nat$ is also $\exists$-well-founded, but not necessarily the converse,
it is meaningful to elaborate this coinductive notion of well foundedness independently from the standard inductive one.


\begin{definition}\label{def: rel}
A set with an extensional inequality and relation is a structure $\C X := (X, =_X, \neq_X, <_X)$, where $(X, =_X, \neq_X) \in \SetExtIneq$ and $x <_X x{'}$ is an extensional binary relation on $X$. We call $\C X$ dichotomous if $\forall_{x, x{'} \in X}\big(x \neq_X x{'} \To (x <_X x{'} \vee x{'} <_X x)\big)$, and we call $\C X$ strong if $\forall_{x, x{'} \in X}\big(x <_X x{'} \To x \neq_X x{'}\big)$. Let $(\SetExtIneqRel, \StrExtFunRel)$ be the category of sets with an extensional inequality and relation and of strongly extensional functions that preserve the corresponding relations i.e., if $\C Y := (Y, =_Y, \neq_Y, <_Y)$ is in $\SetExtIneqRel$ and $f \colon X \to Y$ is in $\StrExtFun$, we also have that $\forall_{x, x{'} \in X}\big(x <_X x{'} \To f(x) <_Y f(x{'})\big)$.
\end{definition}

By definition, $(\SetExtIneqRel, \StrExtFunRel) \leq (\SetExtIneq, \StrExtFun)$. By properties (I3, I4) in section~\ref{sec: nat} $\C N := (\Nat, =_{\Nat}, \neq_{\Nat}, <_{\Nat})$ is in $\SetExtIneqRel$ that is also strong and dichotomous. 


\begin{definition}[$\exists$-well-founded sets]\label{def: ewfs}
Let $\C X := (X, =_X, \neq_X, <_X)$ be in $\SetExtIneqRel$. We say that $\C X$ is an $\exists$-well-founded set $(\exists$-$\wfs)$ if it satisfies the scheme $\exists \PWF_X:$ for every extensional\footnote{The hypothesis of extensionality on $Q(x)$ and $P(x)$ is crucial in the proof of Proposition~\ref{prp: sigmaset}.} formulas $Q(x), P(x)$ on $X$ with 
$$Q(x) \stackrel{\mathsmaller{<_X}} \To P(x) :\TOT \forall_{x \in X}\big(Q(x) \To [P(x) \vee \exists_{x{'} \in X}(x{'} <_X x \ \& \ Q(x{'})]\big),$$
then 
$$\exists_{x \in X}Q(x) \To \exists_{x \in X}P(x).$$
Let $(\exists \WFSet, \StrExtFunRel)$ be the category of $\exists$-$\wfs$ and of strongly extensional functions that preserve the given extensional relations. We call $\C X$ a $\forall$-well-founded set $(\forall$-$\wfs)$ if it satisfies the scheme of $\forall$-well-founded induction $\forall \PWF_X$. 
\end{definition}

By $\exists \PWF_{\Nat}$  the above structure of naturals $\C N$ is an $\exists$-$\wfs$. Recall that $\C N$ is a $\forall$-$\wfs$ within $\INT$. Next, we prove some fundamental results on $\exists$-well-founded sets that also hold constructively for $\forall$-well-founded sets (see~\cite{MRR88, RS93}). Our proofs are interesting because they show that our notion of well-foundedness is sufficient and they provide a new algorithmic content of these results.
A subset $A$ of $X$, where $\C X \in \SetExtIneqRel$ \textit{has no minimal elements} if $\forall_{x \in A}\exists{y \in A}(y <_X x)$. As expected, we define the non-existence of minimal elements in a positive way, in order to avoid weak negation. All basic set-theoretic definitions on $\Nat$ that are included in section~\ref{sec: nat} are extended to arbitrary sets. For example, $A$ is strongly empty if $A \subseteq \emptys_X := \{x \in X \mid x \neq_X x\}$.

\begin{proposition}\label{prp: basic1}
Let $\C X := (X, =_X, <_X)$ be an $\exists$-$\wfs$, and let $A \subseteq X$.\\[1mm]
\normalfont (i)
\itshape $\forall_{x \in X}\big(\neg_{\Nat}(x <_X x)\big)$.\\[1mm]
\normalfont (ii)
\itshape If $A$ has no minimal elements, then $A$ is strongly empty.\\[1mm]
\normalfont (iii)
\itshape If $(x_n)_{n \in \Nat}$ is an infinite descending sequence in $X$, then $A := \{x_n \mid n \in \Nat\}$ is strongly empty.
\end{proposition}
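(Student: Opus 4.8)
The plan is to make (ii) the core argument, obtained from a single application of the defining scheme $\exists \PWF_X$ with a cleverly chosen pair $(Q,P)$, and then to read off (i) and (iii) from it.

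For (ii) I assume $A$ has no minimal elements, i.e. $\forall_{x \in A}\exists_{y \in A}(y <_X x)$, and fix an arbitrary $a$ with $a \in A$; the goal is $a \neq_X a$. I set $Q(x) :\TOT x \in A$ and take $P(x) :\TOT a \neq_X a$, the \emph{constant} (hence trivially extensional) formula asserting the self-apartness of $a$. The inhabitation hypothesis $\exists_{x \in X}Q(x)$ holds since $a \in A$. For the stepping hypothesis $Q(x) \stackrel{<_X}{\To} P(x)$, note that whenever $x \in A$ the no-minimal-elements assumption at once provides $y \in A$ with $y <_X x$, so the second disjunct $\exists_{x'}(x' <_X x \ \& \ Q(x'))$ holds outright; $P$ is never needed on the way. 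Applying $\exists \PWF_X$ gives $\exists_{x \in X}P(x)$, i.e. $\exists_{x \in X}(a \neq_X a)$, and since this formula does not mention the bound variable I extract $a \neq_X a$. As $a \in A$ was arbitrary, $A \subseteq \emptys_X$.

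The only genuine subtlety --- and the step I would watch most carefully --- is the deliberately constant choice of $P$. Taking instead $P(x) :\TOT x \neq_X x$ would yield only $\exists_{x}(x \neq_X x)$, hence, via reflexivity and $(\Ineq_1)$, a proof of $\bot$; recovering the required $a \neq_X a$ from $\bot$ would then invoke the ex falso principle, which the paper is at pains to avoid. The constant $P$ sidesteps $\EFQ$ and hands back the self-apartness of the specific element $a$ directly, which is precisely what keeps the whole proposition inside minimal logic. Conceptually, ``$A$ has no minimal elements'' says exactly that the backward step of $\exists \PWF_X$ is always available for $Q = (\,\cdot \in A)$, so at every inhabited point any $P$ --- in particular a self-apart one --- is forced.

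It then remains to derive (i) and (iii). For (i), assuming $x <_X x$ for a fixed $x$, the singleton $\{x\}$ has no minimal elements: for $z \in \{x\}$ we have $z =_X x$, and extensionality of $<_X$ transports $x <_X x$ to $x <_X z$ with $x \in \{x\}$; so (ii) makes $\{x\}$ strongly empty and yields $x \neq_X x$, whence $(\Ineq_1)$ and reflexivity give $\bot$, i.e. $\neg(x <_X x)$. (Equivalently one runs $\exists \PWF_X$ directly with $Q(z) :\TOT z =_X x$ and a constant false $P$.) For (iii), a descending sequence satisfies $x_{n+1} <_X x_n$ for all $n$, so its range $A = \{x_n \mid n \in \Nat\}$ again has no minimal elements --- any member of $A$ equals some $x_m$, and $x_{m+1} \in A$ with $x_{m+1} <_X x_m$, transported along the equality by extensionality of $<_X$ --- and (ii) shows $A$ strongly empty. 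The residual checks (extensionality of the chosen $Q$; the transport steps) are routine.
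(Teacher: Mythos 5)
Your proof is correct, and its core --- a single application of $\exists \PWF_X$ with $Q(x) :\TOT x \in A$ and a $P$ engineered to name the self-apart element --- is exactly the paper's argument for (ii). The paper takes $P(x) :\TOT x =_X x_0 \ \& \ x \neq_X x$ for a fixed $x_0 \in A$, so its conclusion $\exists_{x \in X}(x =_X x_0 \ \& \ x \neq_X x)$ still needs one application of the extensionality of $\neq_X$ to land on $x_0 \neq_X x_0$; your constant $P(x) :\TOT a \neq_X a$ skips that final transport, and your observation that the naive choice $P(x) :\TOT x \neq_X x$ would only deliver $\bot_{\Nat}$ and hence force a detour through $\EFQ$ is precisely the design constraint the paper's conjunctive $P$ is responding to. The one genuine structural difference is in (i): the paper proves it directly, instantiating $\exists \PWF_X$ with $Q(x) :\TOT x <_X x$ and $P(x) :\TOT \bot_{\Nat}$, the backward step being witnessed by $x$ itself; you instead reduce (i) to (ii) via the singleton $\{z \in X \mid z =_X x\}$. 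Both routes stay within $\MIN$ and neither is circular, since (ii) nowhere uses (i); the paper's version is marginally shorter, while yours makes (ii) carry all the weight, which is tidier and also documents the small transport along $z =_X x$ by the extensionality of $<_X$ that the direct proof hides. Your explicit check in (iii) that the range of a descending sequence has no minimal elements spells out what the paper dismisses as immediate.
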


\begin{proof}
(i) Let the extensional relations $Q(x) :\TOT x <_X x$ and $P(x) :\TOT \bot_{\Nat}$ on $X$. We show that
$Q(x) \stackrel{\mathsmaller{<_X}} \To P(x)$. If $x \in X$, such that $x <_X x$, then $\exists_{x{'} < x}(Q(x{'}))$, since we can take $x$ again. By $\exists \PWF_X$, if we suppose that there is $x_0 \in X$ with $x_0 <_X x_0$, then we get $\exists_{x \in X}\bot_{\Nat}$, and hence we get $\bot_{\Nat}$.\\
(ii) If $x_0 \in A$, let the extensional relations $Q_A(x)$, where $A := \{x \in X \mid Q_A(x)\}$  and $P(x) :\TOT x =_X x_0 \ \& \ x \neq_X x$ on $X$. We show that
$Q_A(x) \stackrel{\mathsmaller{<_X}} \To P_A(x)$. If $x \in X$, such that $x \in A$, since $A$ has no minimal elements there is $y < A$ with $y <_X x$ i.e., $\exists_{y <_X x}Q_A(y)$. By $\exists \PWF_X$ we get $\exists_{x \in X}(x =_X x_0 \ \& \ x \neq_X x)$. Hence by the extensionality of $\neq_X$ we conclude that $x_0 \neq_X x_0$ i.e., $x_0 \in \emptys_X$.\\
(iii) It follows immediately from case s (ii).
\end{proof}

If $\C X := (X, =_X, \neq_X, <_X) \in \SetExtIneqRel$ and $P(x)$ is an extensional formula on $X$, let the standard $\forall$-formulation of the well-foundedness of $\C X$:
$$(\forall \PWF_X) \ \ \ \ \ \ \ \ \ \ \ \ \ \ \ \ \ \ \ \ \ \ \ 
\forall_{x \in X}\big(\forall_{x{'} <_X x}P(x{'}) \To P(x)\big) \To \forall_{x \in X}P(x), \ \ \ \ \ \ \ \ \ \ \ \ \ \ \ \ \ \ \ \ \ \ \ \ \ \ \ \ \ \ \  \ \ \ \ \ \ \ \ \ \ \ \ \ \ \ \ \ \ \ $$
In~\cite{MRR88}, pp.~28-29, the following $(\forall, \vee)$-well-foundedness is given:
$$(\forall,\vee)-(\PWF_X) \ \ \ \ \ \ \ \ \ \ \ \ \ \ \ \ \ \ \ \ \ \ \ \forall_{x \in X}\big(P(x) \vee \exists_{x{'} <_X x}(P(x{'}) \To P(x)\big) \To \forall_{x \in X}P(x). \ \ \ \ \ \ \ \ \ \ \  \ \ \ \ \ \ \ \ \ \ \ \ \ \ \ \ \ \ $$

\begin{proposition}\label{prp: basic2}
	Let $\C X := (X, =_X, <_X)$, $\C Y := (Y, =_Y, <_Y)$ be in $\exists$-$\WFSet$, and let $A \subseteq X$.\\[1mm]
	\normalfont (i)
	If $\C Z := (Z, =_Z, \neq_Z,<_Z) \in\SetExtIneqRel$ and $f \colon Z \to X \in \StrExtFunRel$, then $\C Z$ is an $\exists$-$\wfs$. If $\C X$ is strong, then $\C Z$ is strong.\\[1mm]
	\normalfont (ii)
	\itshape If $=_A$ and $<_A$ are the restrictions of $=_X$ and $<_X$, respectively, then $\C A := (A =_A, <_A)$ is an $\exists$-$\wfs$.\\[1mm]
	\normalfont (iii)
	\itshape The product $\C X \times \C Y := (X \times Y, =_{X \times Y},\neq_{X \times Y}, <_{X \times Y})$, where 
	$$(x, y) \neq_{X \times Y} (x{'}, y{'}) :\TOT x \neq_X x{'} \vee y <_Y y{'},$$
	$$(x, y) <_{X \times Y} (x{'}, y{'}) :\TOT x <_X x{'} \ \& \ y <_Y y{'},$$
	is an $\exists$-$\wfs$. If $\C X$, or $\C Y$, is strong, then  $\C X \times \C Y$ is strong.\\[1mm]
	\normalfont (iv) 
	\itshape The sum $\C X + \C Y := (X + Y, =_{X + Y}, \neq_{X + Y}, <_{X + Y})$, where
	$$w \in X + Y :\TOT \exists_{x \in X}\big(w := (0, x)\big) \vee \exists_{y \in Y}\big(w := (1, y)\big),$$
	$$(i, z) =_{X + Y} (j, u) :\TOT (i =_{\D 2} j =_{\D 2} 0 \ \wedge \ z =_{X} u) \vee (i =_{\D 2} j =_{\D 2} 1 \ \wedge \ z =_{Y} u),$$
	$$(i, z) \neq_{X + Y} (j, u) :\TOT i \neq_{\D 2} j \vee (i =_{\D 2} j =_{\D 2} 0 \ \& \ z \neq_X u) \vee (i =_{\D 2} j =_{\D 2} 1 \ \& \ z \neq_Y u),$$
	$$(i, z) <_{X + Y} (j, u) :\TOT i <_{\D 2} j \vee (i =_{\D 2} j =_{\D 2} 0 \ \& \ z <_X u) \vee (i =_{\D 2} j =_{\D 2} 1 \ \& \ z <_Y u),$$
	is an $\exists$-$\wfs$.  If $\C X$ and $\C Y$ are strong $($dichotomous$)$, then  $\C X + \C Y$ is strong $($dichotomous$)$.
	
\end{proposition}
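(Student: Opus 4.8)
The plan is to treat all four parts by a single device: to check the scheme $\exists \PWF$ for a constructed structure, I transport a given instance $(Q,P)$ of the scheme along the relevant structure map into a structure already known to be an $\exists$-$\wfs$, apply $\exists \PWF$ there, and transport the resulting existence statement back; the strength and dichotomy claims are then dispatched by direct case analysis on the disjunctions defining $\neq$ and $<$. I expect part (iv) to be the main obstacle.

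For (i), given extensional $Q_Z, P_Z$ on $Z$ with $Q_Z(z)\stackrel{\mathsmaller{<_Z}}\To P_Z(z)$ and $\exists_{z\in Z}Q_Z(z)$, I would set $Q_X(x):\TOT\exists_{z\in Z}(x =_X f(z)\ \&\ Q_Z(z))$ and $P_X(x):\TOT\exists_{z\in Z}(x =_X f(z)\ \&\ P_Z(z))$, both extensional on $X$ since $f$ respects $=$. From $Q_X(x)$ witnessed by $z$, the hypothesis on $Z$ yields either $P_Z(z)$, whence $P_X(x)$, or some $z'<_Z z$ with $Q_Z(z')$; relation-preservation of $f$ gives $f(z')<_X f(z)=_X x$, so $f(z')$ witnesses $\exists_{x'<_X x}Q_X(x')$. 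Thus $Q_X\stackrel{\mathsmaller{<_X}}\To P_X$, and $\exists \PWF_X$ delivers $\exists_x P_X(x)$, i.e.\ $\exists_z P_Z(z)$. For strength: $z<_Z z'$ gives $f(z)<_X f(z')$, hence $f(z)\neq_X f(z')$ by strength of $\C X$, hence $z\neq_Z z'$ by strong extensionality of $f$. Part (ii) is then the special case of (i) for the inclusion $\C A\hookrightarrow\C X$, which lies in $\StrExtFunRel$ precisely because $=_A$, $\neq_A$ and $<_A$ are restrictions.

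For (iii) the key observation is that $(x',y')<_{X\times Y}(x,y)$ forces $x'<_X x$, so the first projection refines the order. Given $(Q,P)$ on $X\times Y$ I would define $Q_X(x):\TOT\exists_{y\in Y}Q((x,y))$ and $P_X(x):\TOT\exists_{y\in Y}P((x,y))$, both extensional. From $Q_X(x)$ with witness $y$, the product hypothesis yields $P((x,y))$, giving $P_X(x)$, or some $(x',y')<_{X\times Y}(x,y)$ with $Q((x',y'))$, whence $x'<_X x$ and $Q_X(x')$; so $Q_X\stackrel{\mathsmaller{<_X}}\To P_X$ and $\exists \PWF_X$ gives $\exists_x P_X(x)$, i.e.\ a $P$-element. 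Notably, only the $\exists$-$\wfs$ property of $\C X$ is needed here. Strength follows by cases: $x<_X x'$ (resp.\ $y<_Y y'$) gives $x\neq_X x'$ (resp.\ $y\neq_Y y'$) by strength of the corresponding factor, which is one of the disjuncts defining $\neq_{X\times Y}$.

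Part (iv) is where I expect the real work to lie, because the sum order makes every element of $X$ lie below every element of $Y$, so a descent starting in $Y$ may jump into $X$. I would argue in two stages. First, putting $Q_X(x):\TOT Q((0,x))$ and $P_X(x):\TOT P((0,x))$, I note that every element strictly below $(0,x)$ is of the form $(0,z)$ with $z<_X x$; hence the sum hypothesis restricts to $Q_X\stackrel{\mathsmaller{<_X}}\To P_X$, and $\exists \PWF_X$ gives the implication $\exists_x Q_X(x)\To\exists_{w\in X+Y}P(w)$. Second, putting $Q_Y(y):\TOT Q((1,y))$ and taking the \emph{constant} target $P_Y(y):\TOT\exists_{w\in X+Y}P(w)$, I analyse the elements strictly below $(1,y)$: they are either $(0,z)$ for arbitrary $z$, or $(1,z)$ with $z<_Y y$. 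In the first case $\exists_x Q_X(x)$ holds and the first stage yields $P_Y(y)$; $P((1,y))$ likewise yields $P_Y(y)$; and the remaining case gives $\exists_{z<_Y y}Q_Y(z)$. Hence $Q_Y\stackrel{\mathsmaller{<_Y}}\To P_Y$, and $\exists \PWF_Y$ gives $\exists_y Q_Y(y)\To\exists_w P(w)$. Finally, any witness for $\exists_{w\in X+Y}Q(w)$ carries a tag in $\D 2$, and splitting on $i=_{\D 2}0$ versus $i=_{\D 2}1$ routes it into the first or the second stage, producing the desired $P$-element. The strength and dichotomy assertions I would settle by straightforward case analysis on the three disjuncts defining $\neq_{X+Y}$ and $<_{X+Y}$, invoking in each branch the corresponding property of $\D 2$, $\C X$ and $\C Y$.
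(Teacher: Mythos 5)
Your proposal is correct and follows essentially the same route as the paper: transport the instance $(Q,P)$ along a structure map, apply the $\exists\PWF$ scheme of the known $\exists$-$\wfs$, and transport back, with (ii) reduced to (i) via the inclusion and (iv) handled by a two-stage descent that first settles the $X$-part and then runs $\exists\PWF_Y$ with an auxiliary target absorbing the possibility of a jump into $X$. The only cosmetic differences are that the paper obtains (iii) by citing (i) for the projection $\pr_X$ (which amounts to your direct argument), and in (iv) it uses the disjunctive target $P_Y(y):\TOT P((1,y))\vee\exists_{x\in X}Q((0,x))$ and performs the case split after applying $\exists\PWF_Y$, whereas you fold that split into the verification by taking the constant target $\exists_{w\in X+Y}P(w)$.
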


\begin{proof}
(i) Let $Q_Z(z)$ and $P_Z(z)$ be extensional formulas on $Z$, such that 
$$Q_Z(z) \stackrel{\mathsmaller{\mathsmaller{\mathsmaller{<_{Z}}}}} \Rightarrow P_Z(z) :\TOT \forall_{z \in Z}\big(Q_Z(z) \To \big[P_Z(z) \vee  \exists_{z{'} <_Z z}Q_Z(z{'})\big]\big).$$
We define the following extensional relations on $X$:
$$Q_{f, X}(x) :\TOT \exists_{z \in Z}\big(f(z) =_X x \ \& \ Q_Z(z)\big),$$
$$P_{f, X}(x) :\TOT \exists_{z \in Z}\big(f(z) =_X x \ \& \ P_Z(z)\big),$$
and we show that 
$$Q_{f,X}(x) \stackrel{\mathsmaller{\mathsmaller{\mathsmaller{<_{X}}}}} \Rightarrow P_{f,X}(x) :\TOT \forall_{x \in X}\big(Q_{f,X}(x) \To \big[P_{f,X}(x) \vee  \exists_{x{'} <_X x}Q_{f,X}(x{'})\big]\big).$$
Let $x \in X$ and $z \in Z$, such that $f(z) =_X x$ and $Q_Z(z)$. If $P_Z(z)$, then we get $P_{f,X}(x)$. If $z{'} <_Z z$ with $Q_Z(z{'})$, let $x{'} := f(z{'}) \in X$. Since $f$ respects the relations, we get $x{'} := f(z{'}) <_X f(z) =_X x$ and $Q_{f,X}(x{'})$. Next we suppose that $\exists_{z \in Z}Q_Z(z)$, hence $\exists_{x \in X}Q_{f,X}(x)$. By $\exists$-$\PWF_X$ we get $\exists_{x \in X}P_{f,X}(x)$, hence $\exists_{z \in Z}P_Z(z)$. Moreover, if $z <_Z z{'}$, then $f(z) <_X f(z{'})$, and since $\C X$ is strong, we get $f(z) \neq_X f(z{'})$. Since $f$ is strongly extensional, we get $z \neq_Z z{'}$, and hence $\C Z$ is strong.\\
(ii) It follows from case (i), since the embedding of $A$ into $X$ is in $\StrExtFunRel$.\\
(iii)  It follows from case (i), since the projection function\footnote{Notice that in the case of the product it suffices one of the two sets to be an $\exists$-$\wfs$.} $\pr_X \colon X \times Y \to X$ is in $\StrExtFunRel$.\\
(iv) We only prove that $\exists \PWF_{X + Y}$. Let $Q(w)$ and $P(w)$ be extensional formulas on $X + Y$, such that 
$Q(w) \stackrel{\mathsmaller{\mathsmaller{\mathsmaller{<_{X+Y}}}}} \Longrightarrow P(w)$. Suppose first that there is $x_0 \in X$ with $Q((0, x_0))$. Let $P_X(x) :\TOT P((0,x))$ and $Q_X(x) :\TOT Q((0,x))$ formulas on $X$. Clearly, the extensionality of $P$ and $Q$ implies the extensionality of $P_X$ and $Q_X$, respectively. We show that $Q_X(x) \stackrel{\mathsmaller{\mathsmaller{\mathsmaller{<_{X}}}}} \To P_X(x)$. Let $x \in X$ with $Q_X(x)$. By the hypothesis $Q(w) \stackrel{\mathsmaller{\mathsmaller{\mathsmaller{<_{X+Y}}}}} \Longrightarrow P(w)$ we get
$$P((0, x)) \TOT: P_X(x) \ \vee \ \exists_{(j, u) \in X+Y}\big((j, u) <_{\mathsmaller{X+Y}} (0, x) \ \& \ Q((j, u))\big).$$
If the right disjunct holds, then $j =_{\D 2} 0$, as $j <_{\D 2} 0 \To \bot_{\Nat}$, and $u <_X x$ with $Q((0, u))$. Hence, $Q_X(x) \stackrel{\mathsmaller{\mathsmaller{\mathsmaller{<_{X}}}}} \To P_X(x)$ is shown. As $Q((0, x_0)) \To \exists_{x \in X}Q_X(x)$, by $\exists \PWF_X$ we get $\exists_{x \in X}P_X(x)$, thus $\exists_{w \in X+Y}Q(w)$. Next we suppose that there is $y_0 \in Y$ with $Q(1, y_0)$. Let the following extensional formulas on $Y$:
$$P_Y(y) :\TOT P((1,y)) \vee \exists_{x \in X}Q((0, x)), \ \ \ \ Q_y(y) :\TOT Q((1,y)).$$ 
We show that $Q_Y(y) \stackrel{\mathsmaller{\mathsmaller{\mathsmaller{<_{Y}}}}} \To P_Y(y)$. 
Let $y \in Y$ with $Q_Y(y)$. By the hypothesis $Q(w) \stackrel{\mathsmaller{\mathsmaller{\mathsmaller{<_{X+Y}}}}} \Longrightarrow P(w)$ we get
$$P((1, y)) \ \vee \ \exists_{(j, u) \in X+Y}\big((j, u) <_{\mathsmaller{X+Y}} (1, y) \ \& \ Q((j, u))\big).$$
Trivially, $P((1, y)) \To P_Y(y)$. If the right disjunct holds, then let first the case $j =_{\D 2} 0$ and $u \in X$, such that $Q((0, u))$. Hence, $\exists_{x \in X}Q((0, x))$, and trivially $\exists_{x \in X}Q((0, x)) \To P_Y(y)$. The other case is that $j =_{\D 2} 1$ and $u \in Y$, such that $u <_Y y$ and $Q((1, u)) \TOT: Q_Y(u)$. Hence, $Q_Y(y) \stackrel{\mathsmaller{\mathsmaller{\mathsmaller{<_{Y}}}}} \To P_Y(y)$ is shown. As $Q((1, y_0)) \To \exists_{y \in Y}Q_Y(y)$, by $\exists \PWF_Y$ we get $\exists_{y \in Y}P_Y(y)$. Thus either there is $y \in Y$ with $P((1,y))$, hence $\exists_{w \in X+Y}P(w)$, or $\exists_{x \in X}Q((0, x))$. In the latter case we work as in the first part of the proof. The last part of case (iv) is straightforward to show.
\end{proof}

By Proposition~\ref{prp: basic2}(ii) the structure of booleans $\C B := (\D 2, =_{\D 2}, \neq_{\D 2}, <_{\D 2})$ is in $\exists \WFSet$. Clearly, the projections $\pr_X \colon X \times Y \to X$ and $\pr_Y \colon X \times Y \to Y$ are in $\StrExtFunRel$ and $\C X \times \C Y$ is a product of 
$\C X$ and $\C Y$ in $(\exists \WFSet, \StrExtFunRel)$. Similarly, the injections $\inj_X \colon X \to X + Y$ and $\inj_Y \colon Y \to X + Y$ are in $\StrExtFunRel$ and $\C X + \C Y$ is a coproduct of 
$\C X$ and $\C Y$ in $(\exists \WFSet, \StrExtFunRel)$. Next, we show that the product of two $\exists$-$\wfs$ with the lexicographic order is an $\exists$-$\wfs$.

\begin{proposition}\label{prp: lex}
Let $\C X := (X, =_X, <_X)$, $\C Y := (Y, =_Y, <_Y)$ be in $\exists$-$\WFSet$, and let
$$(x, y) <_{\mathsmaller{\lex}} (x{'}, y{'}) :\TOT x <_X x{'} \vee (x =_X x{'} \ \& \ y <_Y y{'}).$$
\normalfont (i)
\itshape $\C X \times_{\lex} \C Y := (X \times Y, =_{X \times Y}, \neq_{X \times Y}, <_{\mathsmaller{\lex}})$ is an $\exists$-$\wfs$.\\[1mm]
 \normalfont (ii)
 \itshape If $\C X$ and $\C Y$ are strong, then $\C X \times_{\mathsmaller{\lex}} \C Y$ is strong.\\[1mm]
  \normalfont (iii)
 \itshape If $\C X \times_{\mathsmaller{\lex}} \C Y$ is strong and $X, Y$ are inhabited, then $\C X$ and $\C Y$ are strong.\\[1mm]
   \normalfont (iv)
 \itshape If $\C X \times_{\mathsmaller{\lex}} \C Y$ is dichotomous and $X, Y$ are inhabited, then $\C X$ and $\C Y$ are dichotomous.
\end{proposition}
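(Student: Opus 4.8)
The plan is to prove (i) by a \emph{nested} application of the two well-foundedness schemes: an outer induction on $\C X$ whose backward steps are supplied by an inner induction on $\C Y$. Fix extensional formulas $Q(w), P(w)$ on $X\times Y$ with $Q(w) \stackrel{\mathsmaller{<_{\lex}}}{\To} P(w)$, and suppose $Q((x_0,y_0))$ for some $(x_0,y_0)$. I would set
$$Q_X(x) :\TOT \exists_{y \in Y}Q((x,y)), \qquad P_X(x) :\TOT \exists_{w \in X \times Y}P(w),$$
noting that $Q_X$ is extensional in $x$ because $=_{X\times Y}$ is componentwise and $Q$ is extensional, while $P_X$ is constant in $x$, hence trivially extensional. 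The goal is to verify $Q_X(x)\stackrel{\mathsmaller{<_X}}{\To}P_X(x)$ and then feed $\exists_{x}Q_X(x)$ (which holds, witnessed by $x_0$) into $\exists \PWF_X$ to obtain $\exists_x P_X(x)$, i.e.\ $\exists_w P(w)$.

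The inner step is the crux. Fixing $x$ with $Q_X(x)$, I would run $\exists \PWF_Y$ on
$$Q_Y(y) :\TOT Q((x,y)), \qquad P_Y(y) :\TOT \exists_{w \in X \times Y}P(w) \ \vee \ \exists_{x{'} \in X}\big(x{'} <_X x \ \& \ Q_X(x{'})\big),$$
both extensional in $y$ ($P_Y$ being constant in $y$). To check $Q_Y(y)\stackrel{\mathsmaller{<_Y}}{\To}P_Y(y)$, assume $Q((x,y))$ and apply the hypothesis $Q(w)\stackrel{\mathsmaller{<_{\lex}}}{\To}P(w)$ at $w=(x,y)$: either $P((x,y))$, giving the first disjunct of $P_Y(y)$; or there is $(x{'},y{'}) <_{\lex} (x,y)$ with $Q((x{'},y{'}))$. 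Unfolding $<_{\lex}$, if $x{'}<_X x$ then $Q_X(x{'})$ gives the second disjunct of $P_Y(y)$, whereas if $x{'}=_X x$ and $y{'}<_Y y$ then \emph{extensionality of $Q$} transports $Q((x{'},y{'}))$ to $Q((x,y{'}))$, i.e.\ $Q_Y(y{'})$ with $y{'}<_Y y$, the required backward step in $\C Y$. Since $Q_X(x)$ yields $\exists_y Q_Y(y)$, $\exists \PWF_Y$ gives $\exists_y P_Y(y)$; as $P_Y$ does not mention $y$, this is exactly $P_X(x) \vee \exists_{x{'}<_X x}Q_X(x{'})$, establishing $Q_X(x)\stackrel{\mathsmaller{<_X}}{\To}P_X(x)$. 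This nesting, and in particular the extensional transport in the $x{'}=_X x$ subcase, is the main obstacle; the rest is bookkeeping.

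For (ii), if $(x,y)<_{\lex}(x{'},y{'})$ then either $x<_X x{'}$, whence $x\neq_X x{'}$ by strongness of $\C X$, or $x=_X x{'}$ and $y<_Y y{'}$, whence $y\neq_Y y{'}$ by strongness of $\C Y$; either way the corresponding disjunct of $\neq_{X\times Y}$ holds. For the converses (iii) and (iv) I would use inhabitedness of the factors to embed a comparison diagonally into the product and then discard the impossible diagonal disjunct by irreflexivity. Concretely, for (iii) with $\C X$: given $x<_X x{'}$, pick $y\in Y$, note $(x,y)<_{\lex}(x{'},y)$, so by strongness $(x,y)\neq_{X\times Y}(x{'},y)$; the $Y$-disjunct is refuted since $y\neq_Y y\To\bot_{\Nat}$ by $(\Ineq_1)$, leaving $x\neq_X x{'}$. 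For $\C Y$ one fixes the first coordinate and instead refutes $x\neq_X x$, reading off $y\neq_Y y{'}$. Part (iv) has the same shape: given $x\neq_X x{'}$, fix $y\in Y$, obtain $(x,y)\neq_{X\times Y}(x{'},y)$, apply dichotomy of the product to split into $(x,y)<_{\lex}(x{'},y)$ or $(x{'},y)<_{\lex}(x,y)$, and in each case discard the $y<_Y y$ subcase by irreflexivity (Proposition~\ref{prp: basic1}(i)) to read off $x<_X x{'}\vee x{'}<_X x$; the $\C Y$-dichotomy is symmetric with the first coordinate held fixed and $x<_X x$ discarded.
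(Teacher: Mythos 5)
Your proof of (i) is correct and is essentially the paper's own argument: the same nested use of $\exists\PWF_Y$ inside $\exists\PWF_X$, the same auxiliary predicate $P_Y$ carrying the extra disjunct $\exists_{x'<_X x}(\ldots)$, and the same appeal to extensionality of $Q$ to transport $Q((x',y'))$ to $Q((x,y'))$ in the $x'=_X x$ subcase; the paper merely presents the inner $Y$-induction first for the witness $x_0$ and then observes it generalises to arbitrary $x$. For (ii)--(iv) the paper only states that they follow in a straightforward manner, and your diagonal-embedding arguments with irreflexivity (Proposition~\ref{prp: basic1}(i)) filling them in are correct.
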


\begin{proof}
(i) Clearly, the extensionality of $<_X$ and $<_Y$ imply the extensionality of $<_{\mathsmaller{\lex}}$. Let $Q((x, y))$ and $P((x,y))$ extensional formulas on $X \times Y$, such that
$$Q((x,y)) \stackrel{\mathsmaller{\mathsmaller{\mathsmaller{<_{\lex}}}}} \Rightarrow P(x,y) :\TOT \forall_{(x,y) \in X \times Y}\big(Q((x,y)) \To \big[P((x,y)) \vee  \exists_{(x{'}, y{'}) <_{\mathsmaller{\lex}} (x,y)}Q((x{'}, y{'}))\big]\big).$$
We suppose that $\exists_{(x, y) \in X \times Y}Q((x,y))$, and we show that $\exists_{(x, y) \in X \times Y}P((x,y))$. Let $(x_0, y_0) \in X \times Y$, such that $Q((x_0,y_0))$. Let the following extensional formulas on $Y$:
$$Q_Y(y) :\TOT Q((x_0, y)),$$
$$P_Y(y) :\TOT P((x_0, y)) \vee \exists_{x <_X x_0}\exists_{z \in Y}Q((x, z)).$$
Since $Q((x_0, y_0))$, we have that $\exists_{y \in Y}Q_Y(y)$ holds. Next we show that 
$$Q_Y(y) \stackrel{\mathsmaller{\mathsmaller{\mathsmaller{<_{Y}}}}} \Rightarrow P_Y(y) :\TOT \forall_{y \in Y}\big(Q_Y(y) \To \big[P_Y(y) \vee \exists_{y{'} <_Y y}Q_Y(y{'})\big]\big).$$
Let $y \in Y$, such that $Q(x_0, y)$. By the hypothesis $Q((x,y)) \stackrel{\mathsmaller{\mathsmaller{\mathsmaller{<_{\lex}}}}} \Rightarrow P(x,y)$ we get
$$P((x_0,y))  \vee \exists_{(x{'}, y{'}) \in X \times Y}(x{'} <_X x_0 \ \& \ Q(x{'}, y{'})) \vee \exists_{(x{'}, y{'}) \in X \times Y}(x{'} =_X x_0 \ \& \ y{'} <_Y y \ \& \ Q(x{'}, y{'})).$$
The first two disjuncts trivially imply $P_Y(y)$, while the last disjunct, together with the extensionality of $Q$ imply $ \exists_{y{'} <_Y y}Q((x_0, y{'}))$ i.e.,  $\exists_{y{'} <_Y y}Q_Y(y{'})$. Since $\C Y$ is an $\exists$-$\wfs$, we get 
$$\exists_{y \in Y}P_Y(y) :\TOT \exists_{y \in Y}\big(P((x_0, y)) \vee \exists_{x <_X x_0}\exists_{z \in Y}Q((x, z))\big),$$
and hence
$$\exists_{y \in Y}P((x_0, y)) \vee \exists_{x <_X x_0}\exists_{y \in Y}Q((x, y)).$$
If $\exists_{y \in Y}P((x_0, y))$, then $\exists_{(x, y) \in X \times Y}P((x,y))$ holds. If $\exists_{x <_X x_0}\exists_{y \in Y}Q((x, y))$, we use $\exists \PWF_X$ as follows. Let the following extensional formulas on $X$:
$$Q_X(x) :\TOT \exists_{y \in Y}Q((x, y)),$$
$$P_X(x) :\TOT \exists_{y \in Y}P((x, y)).$$
We show that
$$Q_X(x) \stackrel{\mathsmaller{\mathsmaller{\mathsmaller{<_{X}}}}} \Rightarrow P_X(x) :\TOT \forall_{x \in X}\big(Q_X(x) \To \big[P_X(x) \vee \exists_{x{'} <_X x}Q_X(x{'})\big]\big).$$
I.e., if $x \in X$, we show that 
$$\exists_{y \in Y}Q((x, y)) \To \big[\exists_{y \in Y}P((x, y)) \vee \exists_{x{'} <_X x}\exists_{y \in Y}Q((x{'}, y))\big].$$
But what we showed in the first part of our proof was the implication
$$\exists_{y \in Y}Q((x_0, y)) \To \big[\exists_{y \in Y}P((x_0, y)) \vee \exists_{x{'} <_X x_0}\exists_{y \in Y}Q((x{'}, y))\big].$$
Since $x_0$ is arbitrary, the required implication follows in the same way. As $\exists_{x \in X}Q_X(x)$ holds by our intitial hypothesis on $Q((x,y))$, by $\exists \PWF_X$ we get $\exists_{x \in X}P_X(x)$, and hence $\exists_{(x, y) \in X \times Y}P((x,y))$.\\
Cases (i)-(iv) follow in a straightforward manner.
\end{proof}

Notice that the projections on $\C X \times_{\mathsmaller{\lex}} \C Y$ are not in $\StrExtFunRel$. 
It is also immediate to see that the converse to Proposition~\ref{prp: lex}(iv) does not hold, in general.

Proposition~\ref{prp: basic2}(iv) is generalised to the exterior union, or the Sigma-set of a family of $\exists$-$\wfs$ over an index set which is also an $\exists$-$\wfs$. We include both proofs, because they are instructive. First we give the fundamental definition of an indexed family of sets in $(\SetIneq, \StrExtFun)$. A family of sets  indexed by some set $(I, =_I)$ is an
assignment routine $\chi_0 : I \sto \D V_0$ that 
behaves like a function, that is if $i =_I j$, then $\chi_0(i) =_{\D V_0} \chi_0 (j)$.
A more explicit definition, which is due to Richman, is included in~\cite{BB85},
p.~78 (Problem 2), which is made precise in~\cite{Pe20} by highlighting 
the role of dependent assignment routines in its formulation. In accordance to the second
attitude described in the Introduction, 
this is a proof-relevant definition revealing the witnesses of the 
equality $\chi_0(i) =_{\D V_0} \chi_0 (j)$. In the following definition $\D V_0^{\neq, <}$ is the universe of sets with an extensional inequality and relation, and $\D F^{\neq, <}$ is the set of functions in $\StrExtFunRel$ from $\C X$ to $\C Y$ in $\SetExtIneqRel$. For the notion of a (non-dependent, or dependent) assignment routine, we refer to~\cite{Pe20}.

\begin{definition}\label{def: famofsets}
	If $\C I := (I, =_I, \neq_I, <_I)$ is in $\SetExtIneqRel$, let the \textit{diagonal} $D(I) := \{(i,j) \in I \times I \mid i =_I j\}$ of $I$.
	A \textit{family of sets} in $(\SetExtIneqRel, \StrExtFunRel)$ indexed by $\C I$
	is a pair $\C X := (\chi_0, \chi_1)$, where
	$\chi_0 \colon I \sto \D V_0^{\neq, <}$ and
	$$\C X (i) := \big(\chi_0(i), =_{\chi_0(i)}, \neq_{\chi_0(i)}, <_{\chi_0(i)}\big),$$
	for every $i \in I$,
	and\index{$\chi_1$} $\chi_1$, a
	\textit{modulus of function-likeness for}\index{modulus of function-likeness} $\chi_0$, is 
	a dependent operation
	\[ \chi_1 \colon \bigcurlywedge_{(i, j) \in D(I)}\D F^{\neq, <}\big(\chi_0(i), \chi_0(j)\big), 
	\ \ \ \chi_1(i, j) =: \chi_{ij} \colon \chi_0(i) \to \chi_0(j), \ \ \ (i, j) \in D(I), \]
	such that the \textit{transport maps}\index{transport map of a family of sets} $\chi_{ij}$
	\index{transport map of a family of sets}\index{$\chi_{ij}$}
	of $\boldmath \chi$ satisfy the following conditions:\\[1mm]
	\normalfont (a) 
	\itshape For every $i \in I$, we have that $\chi_{ii} = \id_{\chi_0(i)}$.\\[1mm]
	\normalfont (b) 
	\itshape If $i =_I j$ and $j =_I k$, the following triangle commutes
	\begin{center}
		\begin{tikzpicture}
			
			\node (E) at (0,0) {$\chi_0(j)$};
			\node[right=of E] (F) {$\chi_0(k).$};
			\node [above=of E] (D) {$\chi_0(i)$};

			\draw[->] (E)--(F) node [midway,below] {$\chi_{jk}$};
			\draw[->] (D)--(E) node [midway,left] {$\chi_{ij}$};
			\draw[->] (D)--(F) node [midway,right] {$\ \chi_{ik}$};

		\end{tikzpicture}
	\end{center}
	If $\C X, \C Y$ are in $\SetExtIneqRel$, the \textit{constant} $\C I$-\textit{family of sets}
	$\C X$is the pair 
	$(\chi_0^X, \chi_1^X)$, where $\chi_0 (i) := X$, for every 
	$i \in I$, and $\chi_1 (i, j) := \id_X$, for every $(i, j) \in D(I)$. 
	The $\D 2$-family of $\C X$ and $\C Y$ in $\SetExtIneqRel$ is defined by $\chi_0 (0) := \C X$, 
	$\chi_0 (1) := \C Y$, $\chi_{00} := \id_X$ and $\chi_{11} := \id_Y$.
\end{definition}

If $i =_I j$, then $(\chi_{ij}, \chi_{ji}) \colon \chi_0(i) =_{\D V_0^{\neq, <}} \chi_0(j)$.
Next we describe the Sigma-set (or the exterior union, or the disjoint union) of a given family of sets 
in $(\SetExtIneqRel, \StrExtFunRel)$.

\begin{proposition}\label{prp: sigmaset}
	Let $\C X := (\chi_0, \chi_1)$ be an $\C I$-family of sets in $(\SetExtIneqRel, \StrExtFunRel)$. Its Sigma-set is the structure 
	$$\sum_{i \in I}\C X_i := \bigg(\sum_{i \in I}\chi_0 (i), =_{\mathsmaller{\sum_{i \in I}\chi_0 (i)}}, 
	\neq_{\mathsmaller{\sum_{i \in I}\chi_0 (i)}}, <_{\mathsmaller{\sum_{i \in I}\chi_0 (i)}}\bigg),$$
	where
	\[ w \in \sum_{i \in I}\chi_0 (i) : 
	\TOT \exists_{i \in I}\exists_{x \in \chi_0 (i)}\big(w := (i, x)\big), \]
	\[ (i, x) =_{\mathsmaller{\sum_{i \in I}\chi_0 (i)}} (j, y) : \TOT i =_I j \ \& \ \chi_{ij} (x) 
	=_{\chi_0 (j)} y,\]
	\[(i,x) \neq_{\mathsmaller{\sum_{i \in I}\chi_0 (i)}} (j, y) :\TOT i \neq_I j  \vee  \big(i =_I j \ \& \ 
	\chi_{ij}(x) \neq_{\chi_0(j)} y \big), \]
		\[(i,x) <_{\mathsmaller{\sum_{i \in I}\chi_0 (i)}} (j, y) :\TOT i <_I j  \vee  \big(i =_I j \ \& \ 
	\chi_{ij}(x) <_{\chi_0(j)} y \big). \]
	\normalfont (i)
	\itshape Then $\sum_{i \in I}\C X_i$ is in $\SetExtIneqRel$ and its first projection $\pr_1^{\C X} \colon \sum_{i \in I}\chi_0 (i) \sto I,$ defined by the rule\footnote{The global projection operations $\prb_1$ and $\prb_2$ are primitive operations in $\BST$.}
	$\pr_1^{\C X} (i, x) : = \prb_1 (i, x) := i$, is in $\StrExtFun$, but not in $\StrExtFunRel$. \\[1mm]
	\normalfont (ii)
\itshape If $\C I$ and every $\C X_i$ are strong $($dichotomous$)$, then $\sum_{i \in I}\C X_i$ is strong $($dichotomous$)$. \\[1mm]
	\normalfont (iii)
\itshape If $\C I$ and every $\C X_i$ are in $\exists \WFSet$, then $\sum_{i \in I}\C X_i$ is in $\exists \WFSet$.
\end{proposition}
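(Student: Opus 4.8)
The plan is to handle the three parts in increasing order of difficulty, the substance being part~(iii); throughout I write $=_{\Sigma}, \neq_{\Sigma}, <_{\Sigma}$ for the three relations of the Sigma-set. For part~(i) I would check directly that $=_{\Sigma}$ is an equivalence relation and that $\neq_{\Sigma}, <_{\Sigma}$ are extensional, each property reducing to the corresponding property of $\C I$ and of the fibers together with conditions (a),(b) of Definition~\ref{def: famofsets} and the fact that the transport maps $\chi_{ij}$ lie in $\StrExtFunRel$ (so they preserve $=,\neq,<$ and are mutually inverse when $i =_I j$). For the projection $\pr_1^{\C X}$, strong extensionality is immediate: $\pr_1^{\C X}(i,x) \neq_I \pr_1^{\C X}(j,y)$ says $i \neq_I j$, which is the first disjunct of $(i,x) \neq_{\Sigma} (j,y)$. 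It fails to preserve the relation because $(i,x) <_{\Sigma} (j,y)$ admits the case $i =_I j$, under which $\pr_1^{\C X}(i,x) =_I \pr_1^{\C X}(j,y)$ rather than $<_I$.

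For part~(ii) I would unfold the two disjuncts in the definitions of $\neq_{\Sigma}$ and $<_{\Sigma}$ and appeal to the hypotheses on $\C I$ and the fibers. Strongness is routine. The only delicate case is the ``second disjunct'' of dichotomy: from $i =_I j$ and $y <_{\chi_0(j)} \chi_{ij}(x)$ I would apply the relation-preserving transport $\chi_{ji}$, use $\chi_{ji} \circ \chi_{ij} = \chi_{ii} = \id_{\chi_0(i)}$ together with extensionality of $<_{\chi_0(i)}$ to obtain $\chi_{ji}(y) <_{\chi_0(i)} x$, which is exactly the second disjunct of $(j,y) <_{\Sigma} (i,x)$.

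Part~(iii) is the core, and I would model it on Proposition~\ref{prp: lex}(i), reading $<_{\Sigma}$ as a lexicographic order with the index in the leading coordinate. Given extensional $Q,P$ on $\sum_{i\in I}\chi_0(i)$ with $Q(w) \stackrel{\mathsmaller{<_{\Sigma}}}\To P(w)$ and a witness $(i_0,x_0)$ with $Q((i_0,x_0))$, I would first run $\exists \PWF_{\chi_0(i_0)}$ on the fiber. Setting $Q_{i_0}(x) :\TOT Q((i_0,x))$ and $P_{i_0}(x) :\TOT P((i_0,x)) \vee \exists_{i <_I i_0}\exists_{z \in \chi_0(i)}Q((i,z))$, I would verify $Q_{i_0}(x) \stackrel{\mathsmaller{<_{\chi_0(i_0)}}}\To P_{i_0}(x)$: unfolding the hypothesis on an $x$ with $Q((i_0,x))$ yields $P((i_0,x))$, or a witness $(j,u) <_{\Sigma} (i_0,x)$ with $Q((j,u))$, and the latter splits into $j <_I i_0$ (giving the right disjunct of $P_{i_0}$) or $j =_I i_0$ with $\chi_{j i_0}(u) <_{\chi_0(i_0)} x$. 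Applying $\exists \PWF_{\chi_0(i_0)}$ then produces $\exists_{x}P((i_0,x)) \vee \exists_{i <_I i_0}\exists_{z}Q((i,z))$ from the assumption $\exists_{x}Q((i_0,x))$. Since $i_0$ was an arbitrary index satisfying $Q_I(i_0)$, this is precisely $Q_I(i) \stackrel{\mathsmaller{<_I}}\To P_I(i)$ for $Q_I(i) :\TOT \exists_{x}Q((i,x))$ and $P_I(i) :\TOT \exists_{x}P((i,x))$, so a final application of $\exists \PWF_I$ delivers $\exists_{x \in \chi_0(i)}P((i,x))$ for some $i$, i.e. $\exists_{w}P(w)$.

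The main obstacle, and the point where the extensionality of $Q$ flagged in the footnote is indispensable, is the transport step inside the fiber argument. When the backward witness $(j,u)$ has $j =_I i_0$, the element $u$ lives in the \emph{different} fiber $\chi_0(j)$, so it cannot be fed directly to $\exists \PWF_{\chi_0(i_0)}$. I would instead transport it to $\chi_{j i_0}(u) \in \chi_0(i_0)$, observe that $(j,u) =_{\Sigma} (i_0, \chi_{j i_0}(u))$, and invoke extensionality of $Q$ to conclude $Q_{i_0}(\chi_{j i_0}(u))$ with $\chi_{j i_0}(u) <_{\chi_0(i_0)} x$, i.e. $\exists_{x' <_{\chi_0(i_0)} x}Q_{i_0}(x')$. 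Everything else is a faithful generalisation of the binary sum case of Proposition~\ref{prp: basic2}(iv) and of the lexicographic product of Proposition~\ref{prp: lex}.
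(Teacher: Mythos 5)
Your proposal is correct and follows essentially the same route as the paper's proof: the same verification of extensionality and the non-preservation of $<_I$ by $\pr_1^{\C X}$ in part (i), the same transport of $y <_{\chi_0(j)} \chi_{ij}(x)$ back along $\chi_{ji}$ in part (ii), and in part (iii) the same two-stage application of $\exists\PWF$ (first on the fiber $\chi_0(i_0)$ with the auxiliary predicate $P_{i_0}(x) :\TOT P((i_0,x)) \vee \exists_{i <_I i_0}\exists_{z}Q((i,z))$, then on $\C I$ with $Q_I, P_I$), including the key use of the extensionality of $Q$ to transport the backward witness into the fixed fiber. No gaps to report.
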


\begin{proof}
(i) We show that $<_{\mathsmaller{\sum_{i \in I}\chi_0 (i)}}$ is extensional. If
	\[ (i, x) =_{\mathsmaller{\sum_{i \in I}\chi_0 (i)}} (i{'}, x{'}) : \TOT i =_I i{'} \ \& \ \chi_{ii{'}} (x) 
=_{\chi_0 (i{'})} x{'},\]
	\[ (j, y) =_{\mathsmaller{\sum_{i \in I}\chi_0 (i)}} (j{'}, y{'}) : \TOT j =_I j{'} \ \& \ \chi_{jj{'}} (y) 
=_{\chi_0 (j{'})} y{'},\]
\[(i,x) <_{\mathsmaller{\sum_{i \in I}\chi_0 (i)}} (j, y) :\TOT i <_I j  \vee  \big(i =_I j \ \& \ 
\chi_{ij}(x) <_{\chi_0(j)} y \big),\]
then we show that
\[(i{'},x{'}) <_{\mathsmaller{\sum_{i \in I}\chi_0 (i)}} (j{'}, y{'}) :\TOT i{'} <_I j{'} \vee  \big(i{'} =_I j{'} \ \& \ 
\chi_{i{'}j{'}}(x{'}) <_{\chi_0(j{'})} y{'} \big).\]
If $i <_I j$, then we get $i{'} <_I j{'}$ by the extensionality of $<_I$. If $i =_I j \ \& \ 
\chi_{ij}(x) <_{\chi_0(j)} y$, then we get trivially that $i{'} =_I j{'}$. To show $\chi_{i{'}j{'}}(x{'}) <_{\chi_0(j{'})} y{'}$, we 
first
observe that by Definition~\ref{def: famofsets} we have that
$$\chi_{i{'}j{'}}(x{'}) =_{\chi_0 (j{'})} \chi_{i{'}j{'}}\big(\chi_{ii{'}} (x)\big) =_{\chi_0 (j{'})} \chi_{ij{'}} (x).$$
Since the transport maps preserve the corresponding relations, we have that
$$\chi_{ij}(x) <_{\chi_0(j)} y \To \chi_{jj{'}}\big(\chi_{ij}(x)\big) <_{\chi_0(j{'})} \chi_{jj{'}}(y) \TOT 
\chi_{ij{'}} (x) <_{\chi_0(j{'})} y \TOT \chi_{i{'}j{'}}(x{'}) <_{\chi_0(j{'})} y{'}.$$
The extensionality of $\neq_{\mathsmaller{\sum_{i \in I}\chi_0 (i)}}$ is shown similarly. The assignment routine $\pr_1^{\C X}$ is trivially a strongly extensional function, but it does not preserve, in general, the corresponding relations. If $(i,x) <_{\mathsmaller{\sum_{i \in I}\chi_0 (i)}} (j, y)$ because $i =_I j \ \& \ 
\chi_{ij}(x) <_{\chi_0(j)} y$, then by the extensionality of $<_I$ and Proposition~\ref{prp: basic1}(i) we get
$$\pr_1^{\C X}((i, x)) <_I \pr_1^{\C X}((y, w)) :\TOT i <_I j \To i <_I i \To \bot_{\Nat}.$$
(ii) We only show that $\sum_{i \in I}\C X_i$ is dichotomous. Let $(i,x) \neq_{\mathsmaller{\sum_{i \in I}\chi_0 (i)}} (j, y)$. If $i \neq_i j$, then, since $\C I$ is dichotomous, we get $i <_I j$ or $j <_I i$, and hence $(i,x) <_{\mathsmaller{\sum_{i \in I}\chi_0 (i)}} (j, y)$ or $(j,y) <_{\mathsmaller{\sum_{i \in I}\chi_0 (i)}} (i, x)$. If $i =_I j$ and $\chi_{ij}(x) \neq_{\chi_0(j)} y$, then, since $\C X_j$ is dichotomous, we get $\chi_{ij}(x) <_{\chi_0(j)} y$, and hence $(i,x) <_{\mathsmaller{\sum_{i \in I}\chi_0 (i)}} (j, y)$ or $y <_{\chi_0(j)} \chi_{ij}(x)$, and hence $(j,y) <_{\mathsmaller{\sum_{i \in I}\chi_0 (i)}} (i, x)$, since.
$$y <_{\chi_0(j)} \chi_{ij}(x) \To \chi_{ji}(y) <_{\chi_0(i)}  \chi_{ji}\big(\chi_{ij}(x)\big) \TOT \chi_{ji}(y) <_{\chi_0(i)} \chi_{ii}(x) \TOT \chi_{ji}(y) <_{\chi_0(i)} x.$$
(iii) Let extensional formulas $Q(w)$ and $P(w)$ on $\sum_{i \in I}\chi_0 (i)$, such that $Q(w) \stackrel{\mathsmaller{\mathsmaller{\mathsmaller{<_{\sum_{i \in I}\chi_0(i)}}}}} \Longrightarrow P(w)$. Let also $(i_0, x_0) \in \sum_{i \in I}\chi_0(i)$, such that $Q((i, x_0))$. Let the extensional formulas $Q_{i_0}(x) :\TOT Q((i_0, x))$ and 
$$P_{i_0}(x) :\TOT P((i_0, x)) \vee \exists_{i <_I i_{0}}\exists_{x{'} \in \chi_0(i)}Q((i, x{'}))$$
 on $\chi_0(i_0)$. We show that $Q_{i_0}(x) \stackrel{\mathsmaller{\mathsmaller{\mathsmaller{<_{\chi_0(i_0)}}}}} \Longrightarrow P_{i_0}(x)$. If $x \in \chi_0(i_0)$, such that $Q((i_0, x))$, then by our hypothesis $Q(w) \stackrel{\mathsmaller{\mathsmaller{\mathsmaller{<_{\sum_{i \in I}\chi_0(i)}}}}} \Longrightarrow P(w)$ we get $P((i_0, x))$ or there is $(i, x{'}) \in \sum_{i \in I}\chi_0(i)$ with $(i, x{'}) <_{\mathsmaller{\sum_{i \in I}\chi_0(i)}} (i_0, x)$ and $Q((i, x{'}))$. In the latter case, either $i <_I i_0$ with $Q((i, x{'}))$ or $i =_I i_0$ and $\chi_{ii_0}(x{'}) <_{\mathsmaller{\chi_0(i_0)}} x$ with $Q((i, x{'}))$. The first two cases trivially imply $P_{i_0}(x)$. By the extensionality of $Q(w)$ we have that
 $$\big[(i, x{'}) =_{\mathsmaller{\sum_{i \in I}\chi_0(i)}} \big(i_0, \chi_{ii_0}(x{'}\big) \ \& \ Q((i, x{'}))\big] \To Q\big(\big(i_0, \chi_{ii_0}(x{'})\big)\big),$$
and hence $Q_{i_0}(\chi_{ii_0}(x{'}))$. Since $\C X_{i_0}$ is an $\exists$-$\wfs$, we get that 
$$\exists_{x \in \chi_0(i)}\big(P((i_0, x)) \vee \exists_{i <_I i_{0}}\exists_{x{'} \in \chi_0(i)}Q((i, x{'}))\big),$$
hence $\exists_{x \in \chi_0(i)}P((i, x_0))$, which implies trivially that $\exists_{w \in \sum_{i \in I}\chi_0(i)}Q(w)$, or 
$\exists_{i <_I i_{0}}\exists_{x{'} \in \chi_0(i)}Q((i, x{'})).$
In the latter case we define the following extensional formulas on $I$:
$$Q_I(i) :\TOT \exists_{x \in \chi_0(i)}Q((i, x)), \ \ \ \ P_I(i) :\TOT \exists_{x \in \chi_0(i)}P((i, x)).$$
We show that 
$$Q_I(i) \stackrel{\mathsmaller{\mathsmaller{\mathsmaller{<_{I}}}}} \Rightarrow P_I(i) :\TOT \forall_{i \in I}\big(Q_I(i) \To \big[P_I(i) \vee \exists_{i{'} <_I i}Q_I(i{'})\big]\big).$$
If we fix $i \in I$ with $Q_I(i)$, then by repeating the previous proof of $Q_{i_0}(x) \stackrel{\mathsmaller{\mathsmaller{\mathsmaller{<_{\chi_0(i_0)}}}}} \Longrightarrow P_{i_0}(x)$ in the case of $\C X_i$, we get exactly the required disjumction $P_I(i) \ \vee \ \exists_{i{'} <_I i}Q_I(i{'})$. Since 
$\exists_{i \in I}Q_I(i)$ by the conclusion of the last third case, we get by $\exists \PWF_I$ that $\exists_{i \in I}P_I(i)$, hence $\exists_{w \in \sum_{i \in I}\chi_0(i)}P(w)$.
\end{proof}

Clearly, the Sigma-set of the $\D 2$-family of $\C X$ and $\C Y$ is their coproduct $\C X + \C Y$, and Proposition~\ref{prp: basic2}(iv) is a special case of Proposition~\ref{prp: sigmaset}. By Proposition~\ref{prp: sigmaset}(i) in the category $(\exists \WFSet, \StrExtFunRel)$ the Sigma-sets of families in it are not Sigma-objects in the sense of Pitts~\cite{Pi00} (see also~\cite{Pe23}). Notice that the fact that $\pr_1^{\C X}$ is not in $\StrExtFunRel$ explains why we cannot use Proposition~\ref{prp: basic2}(i) in order to show that the Sigma-set of a family in $\exists \WFSet$ is also in $\exists \WFSet$.

%
%
%
%
%
%
%
%
%
%
%
%
%
%
%
%
%
%
%
%
%
%
%
%
%
%

\end{document}